\newtheorem{thm}{Theorem}[section]
\newtheorem{cor}[thm]{Corollary}
\newtheorem{lm}[thm]{Lemma}
\newtheorem{prop}[thm]{Proposition}
\newtheorem{defn}[thm]{Definition}
\newtheorem{rem}[thm]{Remark}
\newtheorem{exam}[thm]{Example}
\numberwithin{equation}{section}
\begin{document}

\author[Ayupov]{Shavkat Ayupov}
\email{sh$_{-}$ayupov@mail.ru}
\address{ V.I.Romanovskiy Institute of Mathematics, Uzbekistan Academy of Sciences, 29, Dormon Yoli street, 100125  Tashkent,   Uzbekistan}

\author[Kudaybergenov]{Karimbergen Kudaybergenov}
\email{karim2006@mail.ru}
\address{Department of Mathematics, Karakalpak State University, Ch. Abdirov 1, Nukus 230113, Uzbekistan}

\author[Omirov]{Bakhrom Omirov}
\email{omirovb@mail.ru}
\address{ National University of Uzbekistan,  4, University street, 100174, Tashkent,   Uzbekistan}

\author[Zhao]{Kaiming Zhao}
\email{kzhao@math.ac.cn}
\address{Department of Mathematics, Wilfrid Laurier University, Waterloo, Ontario, N2L
3C5, Canada}
\address{College of Mathematics and Information Science, Hebei Normal (Teachers)
University, Shijiazhuang 050016, Hebei, PR China}

\title[Semisimple Leibniz algebras]{Semisimple Leibniz algebras and their derivations and automorphisms}

\maketitle

\begin{abstract}
The present paper is devoted to the description of finite-dimensional semisimple Leibniz algebras over complex numbers, their derivations and automorphisms.
\end{abstract}

\medskip

\medskip \textbf{AMS Subject Classifications (2010):
17A32, 17A60, 17B10, 17B20.}

\textbf{Key words:} Lie algebra, Leibniz algebra, simple algebra, semisimple algebra, irreducible module,
derivation, automorphism.

\section{Introduction}

In recent years the non-associative analogues of classical constructions become of interest in connection with their applications in many branches of mathematics and physics. Leibniz algebras present a ``non-commutative" analogue of Lie algebras and they were introduced by Loday \cite{Lod} as algebras satisfying the (right) Leibniz identity:
$$[x,[y, z]]=[[x, y], z] - [[x, z], y].$$ Leibniz algebras preserve an important property of Lie algebras: the operator of right multiplication is a derivation.

During the last decades the theory of Leibniz algebras has been actively investigated. Some (co)gomology and deformation  properties; results on various types of decompositions; structure of solvable and nilpotent Leibniz algebras;
 classifications of some classes of graded nilpotent Leibniz algebras were obtained in  numerous papers devoted to Leibniz algebras, see, for example,  \cite{AlbAyupov1,Bal, p-fil, quasi-fil, Lod-Pir, AlbAyupov2} and
 reference therein. In fact, many results on Lie algebras have been extended to the Leibniz algebra case. For instance, the classical results on Cartan subalgebras
 \cite{Cartan}, Levi's decomposition \cite{Barnes}, properties of solvable algebras
 with given nilradical \cite{Nilradical} and others from the theory of Lie algebras are also true for Leibniz algebras.

From the classical theory of finite-dimensional Lie algebras it is known that an arbitrary semisimple Lie algebra is decomposed into a direct sum of simple ideals, which are completely classified \cite{Jac}. In the paper \cite{example} an example of semisimple Leibniz algebra, which can not be  decomposed into a direct sum of simple ideals, is presented (see Example 3.6 also). This shows that the structure of semisimple   Leibniz algebras is much more complicated than structure of semisimple Lie algebras. Thus, the natural problem arises - to describe semisimple Leibniz algebras. In fact, the structure depends on relations between semisimple Lie algebras and their modules. Due to Barnes' result \cite{Barnes} an arbitrary semisimple Leibniz algebra $\mathcal{L}$ is represented as $\mathcal{L}=\mathcal{S}\dot{+}\mathcal{I}$, where $\mathcal{S}$ is a semisimple
 Lie algebra and $\mathcal{I}$ is the ideal generated by squares of elements of the algebra $\mathcal{L}$.
This means that the problem is focused to investigation of the relation between the ideal $\mathcal{I}$ and the semisimple Lie algebra $\mathcal{S}$.

One of the main results of the present  paper is the description on finite-dimensional semisimple Leibniz algebras (Theorem 3.5). Then next steps, which are crucial for any variety of algebras, are investigations of derivations and automorphisms of these algebras. Note that derivations and automorphisms of semisimple Lie algebras are completely described by their actions on corresponding simple ideals \cite{Jac}. The descriptions of derivations and automorphisms of simple Leibniz algebras were studied in \cite{AKO}. Here we present the description on derivations of semisimple Leibniz algebras. Since any semisimple Leibniz algebra can be represented as a direct sum of indecomposable semisimple Leibniz algebras, the Lie algebra  of derivations of such algebras is also represented as a direct sum of the Lie algebras of all derivations on indecomposable semisimple Leibniz algebras.
The automorphism group of semisimple Leibniz algebras is quite different from that of simple Leibniz algebras, and more subtle than their derivations.

The paper is organized  as follows.

In Section 2 we give some preliminaries from the Leibniz algebra
theory.

Section~3 is devoted to the description of the structure of finite-dimensional complex
semisimple Leibniz algebras (Theorem 3.5). We also determined a structure  of irreducible modules
over a Lie algebra which is a direct sum of two Lie
algebras (see Theorem~\ref{twotensor}).

In Section~4  we completely determine  derivations of finite-dimensional complex semisimple Leibniz algebras (Theorem 4.5).

Finally, In Section~5  we determine the group of  automorphisms of finite-dimensional complex semisimple Leibniz algebras (Theorem 5.7).

Throughout the paper, vector spaces and algebras are finite-dimensional over the field of complex numbers $\mathbb{C}$. We will use $\oplus$ to denote direct sum of ideals in an algebra, while use $\dot +$ to denote direct sum of subalgebras or subspaces. Moreover, in
the table of multiplication of an algebra the omitted products  in terms of a given basis are assumed to be zero.

\section{Preliminaries}

In this section we give necessary definitions and results on semisimple Lie algebras and their irreducible modules towards the description of semisimple Leibniz algebras.

\begin{defn} An algebra $(\mathcal{L},[\cdot,\cdot])$ over a field $\mathbb{F}$ is called a
Leibniz algebra if it satisfies the property
\begin{center}
\([x,[y,z]]=[[x,y],z] - [[x,z],y]\) for all \(x,y \in
\mathcal{L},\)
\end{center}
which is called Leibniz identity.
\end{defn}

The Leibniz identity is a generalization of the Jacobi identity since under the
condition of anti-symmetricity of the product ``[$\cdot\,
,\,\cdot$]'' this identity changes to the Jacobi identity. In
fact, the definition above is the notion of the right Leibniz
algebra, where ``right'' indicates that any right multiplication
operator is a derivation of the algebra. In the present  paper the
term ``Leibniz algebra'' will always mean the ``right Leibniz
algebra''. The left Leibniz algebra is characterized by the
property that any left multiplication operator is a derivation.

Below we present an adapted version to our further using  of the well-known Schur's Lemma (\cite[P.
57, Corollary 3]{Zhel}).

\begin{lm}  \label{schur} Let $\mathcal{G}$ be a complex Lie algebra,  let $\mathcal{V}=\mathcal{V}_1\oplus\cdots
\oplus\mathcal{V}_m$ and $\mathcal{W}=\mathcal{W}_1\oplus\cdots
\oplus\mathcal{W}_n$ be a completely reducible $\mathcal{G}$-modules, where \(\mathcal{V}_1, \ldots,
\mathcal{V}_n, \mathcal{W}_1,\cdots
,\mathcal{W}_n\) are irreducible modules. Then any $\mathcal{G}$-module homomorphism $\varphi : \mathcal{V}
\rightarrow \mathcal{W}$ can be represented as
\[
\varphi=\sum\limits_{i=1}^m\sum\limits_{j=1}^n \lambda_{i j}\varphi_{i j},
\]
where the operator \(\varphi_{i j}:
\mathcal{V}_i\to\mathcal{W}_j
\)  are fixed module homomorphisms  and  \(\lambda_{i j}\) are
 complex numbers. Furthermore   \(\varphi_{i j}\ne0\)  if and only if  \(\varphi_{i j}\) is an  isomorphism if and only if \(\mathcal{V}_i\) and \(\mathcal{W}_j\)  are isomorphic $\mathcal{G}$-modules. \end{lm}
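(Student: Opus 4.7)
The plan is to reduce to the classical form of Schur's Lemma by decomposing $\varphi$ along the irreducible components. Specifically, I would let $\iota_i\colon \mathcal{V}_i\hookrightarrow \mathcal{V}$ and $\kappa_j\colon\mathcal{W}_j\hookrightarrow\mathcal{W}$ denote the canonical inclusions, and $\pi_i\colon \mathcal{V}\to\mathcal{V}_i$, $\rho_j\colon \mathcal{W}\to\mathcal{W}_j$ the canonical projections. All four families are $\mathcal{G}$-module maps thanks to the given direct-sum decompositions. Setting $\psi_{ij}:=\rho_j\circ\varphi\circ\iota_i\colon \mathcal{V}_i\to\mathcal{W}_j$, one recovers $\varphi=\sum_{i,j}\kappa_j\circ\psi_{ij}\circ \pi_i$, so it suffices to express each $\psi_{ij}$ in the desired form.

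For each pair $(i,j)$, the classical Schur's Lemma applied to the homomorphism $\psi_{ij}\colon \mathcal{V}_i\to \mathcal{W}_j$ between irreducible $\mathcal{G}$-modules forces $\psi_{ij}$ to be either zero or an isomorphism; in particular, $\psi_{ij}\neq 0$ implies $\mathcal{V}_i\cong \mathcal{W}_j$. I would then fix, once and for all and independently of $\varphi$, a specific nonzero $\mathcal{G}$-module isomorphism $\varphi_{ij}\colon \mathcal{V}_i\to \mathcal{W}_j$ whenever $\mathcal{V}_i\cong \mathcal{W}_j$, and set $\varphi_{ij}=0$ otherwise. The three-way equivalence in the last sentence of the statement is then built into the construction: by choice $\varphi_{ij}\neq 0$ exactly when $\varphi_{ij}$ is the fixed isomorphism, which occurs exactly when $\mathcal{V}_i\cong \mathcal{W}_j$.

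It remains to extract the scalars. When $\mathcal{V}_i\cong \mathcal{W}_j$, the composition $\varphi_{ij}^{-1}\circ \psi_{ij}$ is a $\mathcal{G}$-endomorphism of the irreducible complex module $\mathcal{V}_i$; hence by the algebraically closed version of Schur's Lemma over $\mathbb{C}$ it equals $\lambda_{ij}\cdot \mathrm{id}$ for some $\lambda_{ij}\in\mathbb{C}$, which gives $\psi_{ij}=\lambda_{ij}\varphi_{ij}$. When $\mathcal{V}_i\not\cong\mathcal{W}_j$ we already have $\psi_{ij}=0=\varphi_{ij}$, so we may simply put $\lambda_{ij}=0$. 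Substituting back into the decomposition from the first step yields the stated formula. I do not anticipate a genuine obstacle; the proof is a bookkeeping refinement of the classical dichotomy, and the only non-trivial input is the algebraic closedness of $\mathbb{C}$ used in the scalar-extraction step.
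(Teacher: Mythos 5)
Your proof is correct and complete. Note that the paper itself does not prove this lemma at all: it is stated as an ``adapted version'' of Schur's Lemma and attributed to Zhelobenko's book, so there is no in-paper argument to compare against. Your argument is the standard one that the citation implicitly relies on: decompose $\varphi$ through the canonical inclusions and projections into components $\psi_{ij}\colon\mathcal{V}_i\to\mathcal{W}_j$, invoke the dichotomy form of Schur's Lemma on each component, and then use the one-dimensionality of $\mathrm{End}_{\mathcal{G}}(\mathcal{V}_i)$ over $\mathbb{C}$ to write $\psi_{ij}=\lambda_{ij}\varphi_{ij}$ for a pre-chosen isomorphism $\varphi_{ij}$. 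The only point worth flagging is that the scalar-extraction step needs the modules to be finite-dimensional (or at least of countable dimension) for the algebraically closed version of Schur's Lemma to apply; this is guaranteed by the paper's standing convention that all vector spaces are finite-dimensional over $\mathbb{C}$, so your proof is valid in the setting where the lemma is used. You also correctly interpret the final three-way equivalence as a property of the fixed family $\{\varphi_{ij}\}$ rather than something to be deduced from $\varphi$, which is the intended reading of ``fixed module homomorphisms.''
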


\begin{defn} A Lie algebra $\mathcal{G}$ is called to be  reductive, if $\mathcal{R(G)}=Z(\mathcal{G}),$ where  $\mathcal{R(G)}$ and $Z(\mathcal{G})$ are the solvable radical and the center of $\mathcal{G}$, respectively.

\end{defn}

\begin{lm}\label{Prop19.1} \cite[Proposition 19.1]{Hum}
(a) Let $\mathcal{G}$ be a reductive Lie algebra. Then
$\mathcal{G}=[\mathcal{G},\mathcal{G}]\oplus Z(\mathcal{G})$, and
$[\mathcal{G},\mathcal{G}]$ is either semisimple or \(0.\)

b) Let $\mathcal{V}$ be a finite-dimensional space and let
$\mathcal{G}\subset \mathfrak{gl}(\mathcal{V})$  be a nonzero Lie
algebra acting irreducibly  on $\mathcal{V}.$ Then $\mathcal{G}$
is reductive with $\dim Z(\mathcal{G})\leq 1$. If in addition
$\mathcal{G}\subset \mathfrak{sl}(\mathcal{V})$ then $\mathcal{G}$
is semisimple.
\end{lm}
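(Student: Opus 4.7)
My approach is to invoke Levi's decomposition. Since $\mathcal{G}$ is reductive, $\mathcal{R}(\mathcal{G})=Z(\mathcal{G})$, and Levi gives a semisimple subalgebra $\mathcal{S}$ (possibly zero) with $\mathcal{G}=\mathcal{S}\dot{+}Z(\mathcal{G})$ as a vector-space sum. Because $Z(\mathcal{G})$ is central, every bracket collapses into $\mathcal{S}$: one has $[\mathcal{G},\mathcal{G}]=[\mathcal{S},\mathcal{S}]=\mathcal{S}$, the last equality holding because any semisimple algebra equals its derived algebra. This identifies $[\mathcal{G},\mathcal{G}]$ with $\mathcal{S}$, so it is either semisimple or $0$, and because both summands are ideals and intersect trivially, $\mathcal{G}=[\mathcal{G},\mathcal{G}]\oplus Z(\mathcal{G})$.

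\textbf{Plan for part (b).} Here my goal is to show that elements of $\mathcal{R}(\mathcal{G})$ act on $\mathcal{V}$ as scalar operators. I would apply Lie's theorem to the solvable algebra $\mathcal{R}(\mathcal{G})$ acting on $\mathcal{V}$, producing the generalized weight-space decomposition $\mathcal{V}=\bigoplus_\lambda \mathcal{V}_\lambda$ where $\lambda$ ranges over characters of $\mathcal{R}(\mathcal{G})$. The crucial point is that, since $\mathcal{R}(\mathcal{G})$ is an ideal of $\mathcal{G}$, each $\mathcal{V}_\lambda$ is stable under the full action of $\mathcal{G}$ (the standard invariance-of-weight-spaces lemma). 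Irreducibility of $\mathcal{V}$ then forces $\mathcal{V}=\mathcal{V}_\lambda$ for a single $\lambda$, so each $r\in\mathcal{R}(\mathcal{G})$ acts as $\lambda(r)\mathrm{id}_\mathcal{V}$. Scalar operators commute with everything in $\mathfrak{gl}(\mathcal{V})$, hence $\mathcal{R}(\mathcal{G})\subseteq Z(\mathcal{G})$; the reverse inclusion is automatic, so $\mathcal{G}$ is reductive. Moreover, $Z(\mathcal{G})$ injects into the one-dimensional space of scalars, giving $\dim Z(\mathcal{G})\leq 1$. If additionally $\mathcal{G}\subseteq \mathfrak{sl}(\mathcal{V})$, any central element $cI$ satisfies $c\dim\mathcal{V}=\mathrm{tr}(cI)=0$, forcing $c=0$ and so $Z(\mathcal{G})=0$; by part (a) then $\mathcal{G}=[\mathcal{G},\mathcal{G}]$ is semisimple.

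\textbf{Main obstacle.} The one genuinely delicate step is the invariance-of-weight-spaces assertion: proving that for $x\in\mathcal{G}$, $r\in\mathcal{R}(\mathcal{G})$, and $v\in\mathcal{V}_\lambda$ one has $xv\in\mathcal{V}_\lambda$. This reduces to showing $\lambda([x,r])=0$, which is not obvious and requires the classical trace argument on a finite-dimensional $x$-stable flag built from iterated applications of $x$ to $v$. Once that lemma is in place, the remainder of the proof is a routine combination of Lie's theorem with the structural decomposition from part (a).
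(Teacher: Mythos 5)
The paper offers no proof of this lemma at all: it is quoted verbatim from Humphreys, Proposition 19.1, and used as a black box, so there is nothing in the paper to compare your argument against except the citation. What you have written is essentially the standard textbook proof of that proposition (Levi decomposition plus perfectness of semisimple algebras for (a); Lie's theorem plus the invariance-of-weight-spaces lemma for (b)), and it is correct in substance. Part (a) is complete as stated: $[\mathcal{G},\mathcal{G}]=[\mathcal{S},\mathcal{S}]=\mathcal{S}$ and $\mathcal{S}\cap Z(\mathcal{G})\subseteq\mathcal{S}\cap\mathcal{R}(\mathcal{G})=0$ give the ideal direct sum. The one imprecision is in (b): you invoke ``the generalized weight-space decomposition'' $\mathcal{V}=\bigoplus_\lambda\mathcal{V}_\lambda$ and then conclude that each $r\in\mathcal{R}(\mathcal{G})$ acts as $\lambda(r)\,\mathrm{id}_{\mathcal{V}}$. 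If $\mathcal{V}_\lambda$ denotes the generalized weight space, that last step fails ($r$ could act as $\lambda(r)\,\mathrm{id}$ plus a nilpotent); if it denotes the true weight space $\{v: rv=\lambda(r)v\ \forall r\in\mathcal{R}(\mathcal{G})\}$, then a direct-sum decomposition of $\mathcal{V}$ into such spaces is not available for a general solvable action. What you actually need, and what the invariance lemma you correctly identify as the crux is designed for, is weaker: Lie's theorem produces one nonzero true weight space, the trace argument $\lambda([x,r])=0$ shows it is $\mathcal{G}$-stable, and irreducibility forces it to be all of $\mathcal{V}$. With that reading the remainder of (b) --- scalars commute with $\mathfrak{gl}(\mathcal{V})$ so $\mathcal{R}(\mathcal{G})\subseteq Z(\mathcal{G})$, $\dim Z(\mathcal{G})\le 1$, and the trace computation killing the center in the $\mathfrak{sl}(\mathcal{V})$ case --- goes through without change.
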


\begin{thm}\label{Cor 21.2} \cite[Corollary 21.2]{Hum}.
Let $\mathcal{G}$ be a semisimple Lie algebra. Then the map
$\lambda \mapsto \mathcal{V}_\lambda$ induces a one-one
correspondence between the set of all dominant integral functions
$\Lambda^+$ and the isomorphism classes of finite-dimensional
irreducible $\mathcal{G}$-modules.
\end{thm}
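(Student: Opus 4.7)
The plan is to establish the bijection by constructing, for each dominant integral weight $\lambda \in \Lambda^+$, a finite-dimensional irreducible module $\mathcal{V}_\lambda$ with highest weight $\lambda$, and then showing that every finite-dimensional irreducible $\mathcal{G}$-module arises this way from a unique such $\lambda$. First I would fix a Cartan subalgebra $\mathcal{H}\subset\mathcal{G}$, choose a base of simple roots, and write the triangular decomposition $\mathcal{G}=\mathcal{N}^-\oplus\mathcal{H}\oplus\mathcal{N}^+$ with Borel subalgebra $\mathcal{B}=\mathcal{H}\oplus\mathcal{N}^+$. For any $\lambda\in\mathcal{H}^*$, let $\mathbb{C}_\lambda$ denote the one-dimensional $\mathcal{B}$-module on which $\mathcal{H}$ acts by $\lambda$ and $\mathcal{N}^+$ acts trivially, and form the Verma module $M(\lambda)=U(\mathcal{G})\otimes_{U(\mathcal{B})}\mathbb{C}_\lambda$.

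Next I would use the PBW theorem to see that $M(\lambda)$ admits a weight space decomposition with one-dimensional top weight space $\mathbb{C}_\lambda$ and all other weights strictly below $\lambda$ in the standard partial order. This forces the sum of all proper submodules to itself be proper (since none contains the highest weight line), yielding a unique maximal submodule $N(\lambda)$ and hence a unique irreducible quotient $L(\lambda):=M(\lambda)/N(\lambda)$.

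The heart of the proof, and the main obstacle, is to show that $L(\lambda)$ is finite-dimensional precisely when $\lambda\in\Lambda^+$. For the forward direction, I would exploit the $\mathfrak{sl}_2$-triple $(e_i,h_i,f_i)$ attached to each simple root $\alpha_i$: acting on the highest weight vector $v_\lambda$, the vector $f_i^{\lambda(h_i)+1}v_\lambda$ must be a singular vector, so dividing out gives a finite-dimensional span under each $\mathfrak{sl}_2(\alpha_i)$. One then shows that the set of weights of $L(\lambda)$ is stable under the Weyl group $W$ and contained in the $W$-saturation of $\lambda$, which is finite; since all weight spaces are finite-dimensional (another PBW argument), $\dim L(\lambda)<\infty$. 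Conversely, if $L(\lambda)$ is finite-dimensional, applying $\mathfrak{sl}_2$-theory to each simple root forces $\lambda(h_i)\in\mathbb{Z}_{\geq 0}$, i.e.\ $\lambda\in\Lambda^+$.

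Finally I would verify surjectivity and injectivity of the assignment $\lambda\mapsto\mathcal{V}_\lambda$. Given an arbitrary finite-dimensional irreducible $\mathcal{G}$-module $\mathcal{V}$, the weight space decomposition (by simultaneous diagonalization of $\mathcal{H}$) has a maximal weight $\lambda$ whose weight vector is annihilated by $\mathcal{N}^+$; the submodule it generates is all of $\mathcal{V}$ by irreducibility, and the universal property of $M(\lambda)$ gives a surjection $M(\lambda)\twoheadrightarrow\mathcal{V}$, forcing $\mathcal{V}\cong L(\lambda)$. Injectivity follows because the highest weight of $L(\lambda)$ is an isomorphism invariant: $L(\lambda)\cong L(\mu)$ implies $\lambda=\mu$. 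This completes the correspondence $\lambda\leftrightarrow\mathcal{V}_\lambda$.
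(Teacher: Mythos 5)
Your proof is correct and is the standard highest-weight argument (Verma module $M(\lambda)$, unique irreducible quotient $L(\lambda)$, finite-dimensionality of $L(\lambda)$ iff $\lambda$ is dominant integral via the $\mathfrak{sl}_2$-triples and Weyl-group stability of the weight set, then surjectivity and injectivity of $\lambda\mapsto L(\lambda)$). The paper itself gives no proof of this statement --- it is quoted as Corollary 21.2 of Humphreys with only a citation --- and your argument is essentially the one found in that reference, so there is nothing to compare beyond noting that you have reconstructed the cited source's proof faithfully.
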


We need the following result from \cite[Page 143, Theorem
6]{ZheWan}.

\begin{thm}\label{Zhe6}  Let \(\mathcal{G}=\mathcal{G}_1\oplus\mathcal{G}_2\)  be a semisimple Lie algebra. Let a Cartan
subalgebra \(\mathcal{H}\)  of \(\mathcal{G}\)  be correspondingly
decomposed as \(\mathcal{H}=\mathcal{H}_1\oplus\mathcal{H}_2,\)
where \(\mathcal{H}_i\)  is a Cartan subalgebra of
\(\mathcal{G}_i\)  (\(i = 1, 2\)). Suppose that the fundamental root system
 \(\Pi\) of \(\mathcal{G}\)  with respect to
\(\mathcal{H}\)  is decomposed into \(\Pi=\Pi_1\cup \Pi_2,\) where
\(\Pi_1=\left\{\alpha_1, \ldots, \alpha_n\right\}\) and
\(\Pi_2\left\{\beta_1, \ldots, \beta_m\right\}\)  is the
fundamental root system   of \(\mathcal{G}_i\)  with respect to
\(\mathcal{H}_i\)  (\(1 = 1, 2\)).
\begin{itemize}
\item[(i)] Suppose \(\varrho_i\)  is an irreducible representation of \(\mathcal{G}_i\)  with
highest weight \(\omega_i\)  and representation space
\(\mathcal{V}_i\)  (\(i = 1, 2\)). If
\[
\varrho(x)(v_1\otimes v_2)=\varrho_1(x_1)v_1\otimes v_2+v_1\otimes
\varrho_2(x_2)v_2,
\]
\[
x=x_1+x_2,\,\, x_i\in \mathcal{G}_i,\,\, i = 1, 2,
\]
then \(\varrho\)  is an irreducible representation of
\(\mathcal{G}\)  with highest weight \(\omega_1+\omega_2,\) where
\[
(\omega_1+\omega_2)(h)=\omega_1(h_1)+\omega_2(h_2),\,\,
h=h_1+h_2,\,\, h_i\in\mathcal{H}_i,\,i=1, 2.
\]
\item[(ii)] Conversely, every irreducible representation of \(\mathcal{G}\)
is obtained as in (i).
\end{itemize}
\end{thm}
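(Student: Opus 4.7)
The plan is to treat the two parts in order, proving (i) by constructing an explicit highest weight vector and verifying irreducibility, and then deducing (ii) from the classification of finite-dimensional irreducible modules by their highest weights (Theorem~\ref{Cor 21.2}).

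For part (i), I would first check that the prescribed action indeed makes $\mathcal{V}_1\otimes\mathcal{V}_2$ a $\mathcal{G}$-module. The Leibniz/Lie bracket check reduces to $[\varrho_1(x_1),\varrho_2(y_2)]=0$ when acting on the separate tensor factors, which is immediate because the two summands $\mathcal{G}_1$ and $\mathcal{G}_2$ commute in $\mathcal{G}$. Next I would exhibit a highest weight vector. Choose a positive root system $\Phi^+=\Phi^+_1\cup\Phi^+_2$ compatible with the decomposition $\Pi=\Pi_1\cup\Pi_2$, and let $v_i\in\mathcal{V}_i$ be a highest weight vector for $\mathcal{G}_i$ of weight $\omega_i$. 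Every root vector $e_\alpha$ for $\alpha\in\Phi^+$ lies either in $\mathcal{G}_1$ or in $\mathcal{G}_2$, so
\[
\varrho(e_\alpha)(v_1\otimes v_2)=\varrho_1(e_\alpha)v_1\otimes v_2+v_1\otimes\varrho_2(e_\alpha)v_2=0
\]
in either case, and for $h=h_1+h_2\in\mathcal{H}$ one has $\varrho(h)(v_1\otimes v_2)=(\omega_1(h_1)+\omega_2(h_2))v_1\otimes v_2$. Hence $v_1\otimes v_2$ is a highest weight vector of weight $\omega_1+\omega_2$.

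The main step is to prove irreducibility of $\varrho$. The cleanest route is to invoke Jacobson's density (Burnside) theorem: since $\mathcal{V}_i$ is an irreducible $\mathcal{G}_i$-module, the image of $U(\mathcal{G}_i)$ in $\mathrm{End}(\mathcal{V}_i)$ equals all of $\mathrm{End}(\mathcal{V}_i)$. Because $\mathcal{G}_1$ and $\mathcal{G}_2$ act on the two factors independently and commute, the image of $U(\mathcal{G})=U(\mathcal{G}_1)\otimes U(\mathcal{G}_2)$ in $\mathrm{End}(\mathcal{V}_1\otimes\mathcal{V}_2)$ contains $\mathrm{End}(\mathcal{V}_1)\otimes\mathrm{End}(\mathcal{V}_2)=\mathrm{End}(\mathcal{V}_1\otimes\mathcal{V}_2)$. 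Any nonzero $\mathcal{G}$-submodule is therefore stable under all of $\mathrm{End}(\mathcal{V}_1\otimes\mathcal{V}_2)$ and must be the whole space. An alternative, more elementary path is to take a nonzero $w=\sum_{i} v_1^{(i)}\otimes v_2^{(i)}$ with $v_2^{(i)}$ linearly independent of minimal length, use the irreducibility of $\mathcal{V}_1$ to reduce to a single tensor $v_1\otimes v_2^{(1)}$, and then apply the irreducibility of $\mathcal{V}_2$; either argument works.

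For part (ii), let $\varrho$ be an arbitrary finite-dimensional irreducible representation of $\mathcal{G}$ with highest weight $\omega$. Since $\Pi=\Pi_1\cup\Pi_2$ and $\mathcal{H}^*=\mathcal{H}_1^*\oplus\mathcal{H}_2^*$, we may write $\omega=\omega_1+\omega_2$ with $\omega_i$ the restriction of $\omega$ to $\mathcal{H}_i$; the dominance and integrality of $\omega$ with respect to $\Pi$ transfer to $\omega_i$ with respect to $\Pi_i$. By Theorem~\ref{Cor 21.2} applied to each $\mathcal{G}_i$, there exists an irreducible $\mathcal{G}_i$-module $\mathcal{V}_i$ of highest weight $\omega_i$. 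Part (i) then produces an irreducible $\mathcal{G}$-module $\mathcal{V}_1\otimes\mathcal{V}_2$ of highest weight $\omega_1+\omega_2=\omega$, and the uniqueness part of Theorem~\ref{Cor 21.2} for $\mathcal{G}$ yields $\varrho\cong\varrho_1\otimes\varrho_2$. The principal obstacle is establishing irreducibility in part (i); once the density argument is in hand, the passage to (ii) is essentially bookkeeping on highest weights.
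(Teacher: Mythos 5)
Your proof is correct; the only thing to note is that the paper does not prove Theorem~\ref{Zhe6} at all --- it is quoted from \cite{ZheWan} (p.~143, Theorem~6) --- so there is no internal argument to compare yours against. Your route is the standard one and is sound: the bracket check works because the cross terms \([\varrho_1(x_1)\otimes 1,\,1\otimes\varrho_2(y_2)]\) vanish; \(v_1\otimes v_2\) is visibly a highest weight vector of weight \(\omega_1+\omega_2\) since every positive root vector lies in one summand or the other; and the Burnside/Jacobson density argument (the image of \(U(\mathcal{G}_1)\otimes U(\mathcal{G}_2)\) in \(\mathrm{End}(\mathcal{V}_1\otimes\mathcal{V}_2)\) is all of \(\mathrm{End}(\mathcal{V}_1)\otimes\mathrm{End}(\mathcal{V}_2)=\mathrm{End}(\mathcal{V}_1\otimes\mathcal{V}_2)\), which is where algebraic closedness of \(\mathbb{C}\) enters) settles irreducibility. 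One small caveat: the ``more elementary'' alternative you sketch --- reducing a minimal-length tensor \(\sum_i v_1^{(i)}\otimes v_2^{(i)}\) --- itself needs an operator in the image of \(U(\mathcal{G}_1)\) that kills all but one of the \(v_1^{(i)}\), which is again density in disguise, so it is not genuinely independent of the first argument. Part (ii) is, as you say, bookkeeping: since the simple coroots of \(\mathcal{G}\) are the disjoint union of those of \(\mathcal{G}_1\) and \(\mathcal{G}_2\), the restrictions \(\omega_i=\omega|_{\mathcal{H}_i}\) of a dominant integral \(\omega\) are dominant integral, and the uniqueness statement in Theorem~\ref{Cor 21.2} applied to \(\mathcal{G}\) identifies the given module with \(\mathcal{V}_1\otimes\mathcal{V}_2\).
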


For a Leibniz algebra $\mathcal{L}$, a subspace generated by its squares $\mathcal{I}=\text{span}\left\{[x,x]:  x\in \mathcal{L}\right\}$
due to Leibniz identity becomes an ideal, and the quotient $\mathcal{G}_\mathcal{L}=\mathcal{L}/\mathcal{I}$ is a Lie algebra
called liezation of $\mathcal{L}.$ Moreover,  $[ \mathcal{L}, \mathcal{I}]=0.$ In general, $[\mathcal{I}, \mathcal{L}]\ne 0$.
Since we are interested in Leibniz algebras which are not Lie algebras, we will always assume that $\mathcal{I}\ne0$.
%Then $[\mathcal{I},\mathcal{S}]=\mathcal{I}$.

\begin{defn}
A Leibniz algebra $\mathcal{L}$ is called  simple  if  its liezation is a simple Lie algebra and the ideal $\mathcal{I}$ is a
simple ideal. Equivalently, $\mathcal{L}$ is simple iff $\mathcal{I}$ is the only non-trivial ideal of $\mathcal{L}.$
\end{defn}

\begin{defn}\label{construction} Let  $\mathcal{G}$ be a Lie algebra  and  $\mathcal{V}$ a  (right) $\mathcal{G}$-module. Endow the vector space $\mathcal{L}=\mathcal{G} \oplus \mathcal{V}$ with the brackets as follows:
$$
[(g_1, v_1), (g_2, v_2)]:=([g_1, g_2], v_1.g_2),
$$
where  $v. g$ (sometimes denoted as $[v, g]$)  is an action of an element $g$ of $\mathcal{G}$ on
$v\in \mathcal{V}.$ Then $\mathcal{L}$ is a Leibniz algebra, denoted as $\mathcal{G}\ltimes V$.
\end{defn}

\begin{defn}
A Leibniz algebra $\mathcal{L}$ is called \textit{semisimple} if
its liezation $\mathcal{G}$ is a semisimple Lie algebra.
\end{defn}

The following theorem recently proved by D.~Barnes \cite{Barnes}
presents an analogue of Levi--Malcev's theorem for Leibniz
algebras.

\begin{thm} \label{t2}  \label{thmBarnes} Let $\mathcal{L}$ be a finite dimensional Leibniz algebra over a
field of characteristic zero and let $\mathcal{R}$ be its solvable
radical. Then there exists a semisimple Lie subalgebra
$\mathcal{S}$ of $\mathcal{L}$ such that
$\mathcal{L}=\mathcal{S}\dot{+}\mathcal{R}.$
\end{thm}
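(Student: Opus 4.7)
The plan is to reduce the theorem to the classical Lie-algebra Levi--Malcev theorem by first factoring out the ``squares ideal'' $\mathcal{I}=\text{span}\{[x,x]:x\in\mathcal{L}\}$. The crucial structural input is that $\mathcal{I}$ is abelian: the Leibniz identity gives $[\mathcal{I},\mathcal{I}]\subseteq[\mathcal{L},\mathcal{I}]=0$, so $\mathcal{I}\subseteq\mathcal{R}$. Consequently $\mathcal{L}/\mathcal{I}$ is a Lie algebra with solvable radical $\mathcal{R}/\mathcal{I}$, and the classical Levi--Malcev theorem produces a semisimple Lie subalgebra $\bar{\mathcal{S}}\subseteq\mathcal{L}/\mathcal{I}$ with $\mathcal{L}/\mathcal{I}=\bar{\mathcal{S}}\,\dot{+}\,\mathcal{R}/\mathcal{I}$. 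Letting $\pi:\mathcal{L}\to\mathcal{L}/\mathcal{I}$ denote the quotient and $\mathcal{M}=\pi^{-1}(\bar{\mathcal{S}})$, one has $\mathcal{L}=\mathcal{M}+\mathcal{R}$ and $\mathcal{M}\cap\mathcal{R}=\mathcal{I}$; the theorem therefore reduces to finding a semisimple Lie subalgebra $\mathcal{S}\subseteq\mathcal{M}$ complementary to $\mathcal{I}$.

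To construct $\mathcal{S}$ I would equip $\mathcal{M}$ with the right $\bar{\mathcal{S}}$-module structure given by right multiplication: for $m\in\mathcal{M}$ and $\bar{x}\in\bar{\mathcal{S}}$, pick any lift $x\in\mathcal{M}$ of $\bar{x}$ and set $m\cdot\bar{x}=[m,x]$. The identity $[\mathcal{L},\mathcal{I}]=0$ makes the choice of lift irrelevant, and the Leibniz identity translates directly into the module axiom $m\cdot[\bar{x},\bar{y}]=(m\cdot\bar{x})\cdot\bar{y}-(m\cdot\bar{y})\cdot\bar{x}$ for the induced bracket on $\bar{\mathcal{S}}$. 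Since $\mathcal{I}$ is an ideal of $\mathcal{M}$ it is a submodule, so by Weyl's complete reducibility theorem for finite-dimensional modules over the semisimple Lie algebra $\bar{\mathcal{S}}$ there exists a $\bar{\mathcal{S}}$-invariant complement $\mathcal{S}$ with $\mathcal{M}=\mathcal{S}\,\dot{+}\,\mathcal{I}$. The $\bar{\mathcal{S}}$-invariance unwinds to $[\mathcal{S},\mathcal{M}]\subseteq\mathcal{S}$, so $\mathcal{S}$ is a right ideal of $\mathcal{M}$ and in particular a subalgebra. Finally, for any $s\in\mathcal{S}$ the square $[s,s]$ lies both in $\mathcal{S}$ (since $\mathcal{S}$ is a subalgebra) and in $\mathcal{I}$ (by definition of $\mathcal{I}$), hence in $\mathcal{S}\cap\mathcal{I}=0$; so squares vanish identically on $\mathcal{S}$ and $\mathcal{S}$ is a Lie subalgebra, semisimple because it projects isomorphically to $\bar{\mathcal{S}}$. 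Combining with $\mathcal{L}=\mathcal{M}+\mathcal{R}$ yields the sought $\mathcal{L}=\mathcal{S}\,\dot{+}\,\mathcal{R}$.

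The step I expect to need the most care is verifying that the right-multiplication formula really defines a $\bar{\mathcal{S}}$-module structure on the whole Leibniz algebra $\mathcal{M}$: independence of the lift hinges on $[\mathcal{L},\mathcal{I}]=0$, and the module compatibility with the Lie bracket on $\bar{\mathcal{S}}$ hinges on the Leibniz identity, and both must be set up before one can invoke Weyl reducibility. Once that module structure is in place, the rest of the argument uses only existence of submodule complements together with the tautology that any $\bar{\mathcal{S}}$-invariant complement automatically has vanishing squares; this neatly sidesteps the Leibniz-cohomological difficulties (in particular the non-antisymmetry of the natural $2$-cocycle) that a direct ``lift and correct'' approach would entail.
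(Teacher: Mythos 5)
The paper does not prove this statement: it is imported verbatim from Barnes' article \cite{Barnes}, so there is no internal proof to compare against. Your argument is correct, and it is in essence a faithful reconstruction of Barnes' own proof: pass to the Lie algebra $\mathcal{L}/\mathcal{I}$, apply the classical Levi--Malcev theorem there, pull back to $\mathcal{M}=\pi^{-1}(\bar{\mathcal{S}})$, and split $\mathcal{M}$ over the abelian ideal $\mathcal{I}$ by viewing right multiplication as a $\bar{\mathcal{S}}$-module structure (well defined because $[\mathcal{L},\mathcal{I}]=0$) and invoking Weyl's complete reducibility; the observation that any invariant complement has $[s,s]\in\mathcal{S}\cap\mathcal{I}=0$, hence is automatically a Lie subalgebra isomorphic to $\bar{\mathcal{S}}$, is exactly the device that makes the extension split without any Leibniz cohomology. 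All the supporting facts you use ($\mathcal{I}$ abelian and hence inside $\mathcal{R}$, $\mathcal{R}/\mathcal{I}$ equal to the radical of $\mathcal{L}/\mathcal{I}$, $\mathcal{M}\cap\mathcal{R}=\mathcal{I}$, and the Leibniz identity giving the right-module axiom up to the usual sign for right actions) check out over any field of characteristic zero, which is the stated generality.
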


Application of the analogue of Levi's Theorem for Leibniz algebras to the case of semisimple Leibniz algebras implies the following

\begin{prop}
Let $\mathcal{L}$ be a finite-dimensional semisimple  Leibniz
algebra. Then $\mathcal{L}= \mathcal{S}\oplus \mathcal{I},$ where
$\mathcal{S}$ is a semisimple Lie subalgebra of $\mathcal{L}$.  Moreover  $[\mathcal{I}, \mathcal{S}]=\mathcal{I}$.
\end{prop}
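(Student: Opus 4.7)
The plan is to identify $\mathcal{I}$ with the solvable radical $\mathcal{R}$ of $\mathcal{L}$, then invoke the Leibniz version of Levi's theorem (Theorem~\ref{thmBarnes}) to produce a semisimple Lie complement $\mathcal{S}$, and finally use that $\mathcal{I}$ is by definition spanned by squares to force $[\mathcal{I},\mathcal{S}]$ to fill up all of $\mathcal{I}$.

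First I would show $\mathcal{I}=\mathcal{R}$. From $[\mathcal{L},\mathcal{I}]=0$ we immediately get $[\mathcal{I},\mathcal{I}]=0$, so $\mathcal{I}$ is abelian, hence solvable, which yields $\mathcal{I}\subseteq\mathcal{R}$. Conversely, by the definition of a semisimple Leibniz algebra the liezation $\mathcal{L}/\mathcal{I}$ is a semisimple Lie algebra, so the image of $\mathcal{R}$ in it is a solvable ideal of a semisimple Lie algebra, and must vanish; hence $\mathcal{R}\subseteq\mathcal{I}$. Applying Theorem~\ref{thmBarnes} now gives a semisimple Lie subalgebra $\mathcal{S}\subseteq\mathcal{L}$ with $\mathcal{L}=\mathcal{S}\dot{+}\mathcal{R}=\mathcal{S}\dot{+}\mathcal{I}$, in which $\mathcal{I}$ is a two-sided ideal.

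For the remaining identity $[\mathcal{I},\mathcal{S}]=\mathcal{I}$, the inclusion $[\mathcal{I},\mathcal{S}]\subseteq\mathcal{I}$ is free since $\mathcal{I}$ is an ideal. For the reverse inclusion, I would pick an arbitrary $x\in\mathcal{L}$, write $x=s+i$ with $s\in\mathcal{S}$ and $i\in\mathcal{I}$, and expand
\[
[x,x]=[s,s]+[s,i]+[i,s]+[i,i].
\]
Here $[s,s]=0$ because $\mathcal{S}$ is a Lie subalgebra, while $[s,i]=0$ and $[i,i]=0$ both follow from $[\mathcal{L},\mathcal{I}]=0$. Therefore $[x,x]=[i,s]\in[\mathcal{I},\mathcal{S}]$, and since $\mathcal{I}$ is by definition spanned by squares $[x,x]$, this yields $\mathcal{I}\subseteq[\mathcal{I},\mathcal{S}]$.

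I do not anticipate a serious obstacle; the only pitfall is to keep the left/right asymmetry of the Leibniz bracket straight, in particular not to confuse the hypothesis $[\mathcal{L},\mathcal{I}]=0$ (vanishing when $\mathcal{I}$ sits on the right) with the genuinely nonzero action $[\mathcal{I},\mathcal{L}]$ that appears in the last step.
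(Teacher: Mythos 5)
Your proof is correct and follows exactly the route the paper intends: the paper states this proposition as an immediate application of Barnes' theorem and omits all details, and your identification $\mathcal{I}=\mathcal{R}$ together with the expansion of $[s+i,\,s+i]$ supplies precisely the missing verification. One cosmetic remark: the paper's use of $\oplus$ in the statement conflicts with its own convention (reserving $\oplus$ for sums of ideals), since $\mathcal{S}$ is not an ideal once $[\mathcal{I},\mathcal{S}]=\mathcal{I}\neq 0$; your reading of the decomposition as $\mathcal{S}\dot{+}\mathcal{I}$ is the right one.
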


It should be noted that $\mathcal{I}$ is a nontrivial module over the Lie algebra $\mathcal{S}$.

The structure of simple  Leibniz algebras is clear. By Proposition~\ref{construction} a simple  Leibniz algebra is a sum of a simple Lie algebra $\mathcal{S}$
and a simple module (the highest weight module) over $\mathcal{S}$. Our problem concerns semisimple Leibniz algebras.

Semisimple Leibniz algebras in general, are not direct sum of simple Leibniz algebras.
See the next example.

\begin{exam}
Let $\mathcal{L}$ be a $10$-dimensional Leibniz
algebra. Let $\{e_1, h_1, f_1, e_2, h_2, f_2, \\ x_1, x_2, x_3,
x_4\}$ be a basis of $\mathcal{L}$ such that
$\mathcal{I}=\{x_1, x_2, x_3, x_4\},$ and multiplication table of
$\mathcal{L}$ has the following form:
$$
[\mathfrak{sl}_2^i, \mathfrak{sl}_2^i]: \quad
\begin{array}{lll}
\ [e_i,h_i]=2e_i, & [f_i,h_i]=-2f_i, & [e_i,f_i]=h_i, \\
\ [h_i,e_i]=-2e_i & [h_i,f_i]=2f_i,  & [f_i,e_i]=-h_i, \ i=1, 2, \\
\end{array}
$$
$$
[\mathcal{I},\mathfrak{sl}_2^1]: \quad
\begin{array}{llll}
\, [x_1,f_1]=x_2, & [x_1,h_1]=x_1, & [x_2,e_1]=-x_1, & [x_2,h_1]=-x_2, \\
\, [x_3,f_1]=x_4, & [x_3,h_1]=x_3, & [x_4,e_1]=-x_3, & [x_4,h_1]=-x_4, \\
\end{array}
$$
$$
[\mathcal{I},\mathfrak{sl}_2^2]: \quad
\begin{array}{llll}
\, [x_1,f_2]=x_3, & [x_1,h_2]=x_1, & [x_3,e_2]=-x_1, & [x_3,h_2]=-x_3, \\
\, [x_2,f_2]=x_4, & [x_2,h_2]=x_2, & [x_4,e_2]=-x_2, & [x_4,h_2]=-x_4, \\
\end{array}
$$
(omitted products of the basis vectors are equal to zero).
\end{exam}

From this table of multiplications we conclude that $\mathcal{L}$ is semisimple and 
$[\mathcal{I},\mathfrak{sl}_2^1] =[\mathcal{I},\mathfrak{sl}_2^2]
=\mathcal{I}.$ Moreover, $\mathcal{I}$ splits over
$\mathfrak{sl}_2^1$ (i.e. $\mathcal{I}=\text{span}\{x_1,x_2\}\oplus
\text{span}\{x_3,x_4\}$) and over $\mathfrak{sl}_2^2$ (i.e.
$\mathcal{I}=\text{span}\{x_1,x_3\}\oplus\text{span} \{x_2,x_4\}$). Actually, $\mathcal{I}$ is a simple ideal (an irreducible $(\mathfrak{sl}_2^1\oplus
\mathfrak{sl}^2_2)$-module). Therefore,
$
\mathcal{L}=(\mathfrak{sl}_2^1\oplus
\mathfrak{sl}^2_2)\ltimes \mathcal{I} $ cannot be a direct sum of two nonzero ideals. See Example 3.6 for a more general case.

\section{Structure of semisimple Leibniz algebras}

%\subsection{Irreducible semisimple Leibniz algebras}

First we give the generalized version of Theorem~\ref{Zhe6}, which has an independent interest.

\begin{thm}\label{twotensor}
Let $\mathcal{G}=\mathcal{G}_1\oplus \mathcal{G}_2$ be a direct
sum of Lie algebras and let $\mathcal{V}$ be a finite-dimensional
irreducible $\mathcal{G}$-module   with $\dim \mathcal{V}
>1.$ Then there are  finite-dimensional irreducible $ \mathcal{G}_i$-modules $\mathcal{V}_i$ so that
$
\mathcal{V} \cong
\mathcal{V}_{ 1}\otimes \mathcal{V}_{ 2}.
$
\end{thm}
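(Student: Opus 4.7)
The plan is to reduce to the case already handled by Theorem~\ref{Zhe6}. Let $\pi\colon \mathcal{G}\to \mathfrak{gl}(\mathcal{V})$ denote the representation. Since $\dim\mathcal{V}>1$ and $\mathcal{V}$ is irreducible, $\pi(\mathcal{G})$ is a nonzero Lie subalgebra of $\mathfrak{gl}(\mathcal{V})$ acting irreducibly. By Lemma~\ref{Prop19.1}(b), $\pi(\mathcal{G})$ is reductive with $\dim Z(\pi(\mathcal{G}))\le 1$, and by part~(a) of the same lemma, $\pi(\mathcal{G})=\mathfrak{s}\oplus \mathfrak{z}$, where $\mathfrak{s}=[\pi(\mathcal{G}),\pi(\mathcal{G})]$ is semisimple (or zero) and $\mathfrak{z}=Z(\pi(\mathcal{G}))$. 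Classical Schur's lemma forces $\mathfrak{z}$ to act on $\mathcal{V}$ by scalars, so $\mathcal{V}$ is already irreducible as a module over $\mathfrak{s}$.

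The next step is to transport the decomposition $\mathcal{G}_1\oplus\mathcal{G}_2$ down to $\mathfrak{s}$. Because $[\mathcal{G}_1,\mathcal{G}_2]=0$, the images $\pi(\mathcal{G}_1)$ and $\pi(\mathcal{G}_2)$ commute inside $\mathfrak{gl}(\mathcal{V})$, and hence so do the subalgebras $\mathfrak{s}_i:=[\pi(\mathcal{G}_i),\pi(\mathcal{G}_i)]$, with $\mathfrak{s}_1+\mathfrak{s}_2=\mathfrak{s}$. Any element of $\mathfrak{s}_1\cap\mathfrak{s}_2$ commutes with both $\mathfrak{s}_1$ and $\mathfrak{s}_2$, hence with all of $\mathfrak{s}$, but $Z(\mathfrak{s})=0$; thus $\mathfrak{s}=\mathfrak{s}_1\oplus\mathfrak{s}_2$ is a direct sum of semisimple ideals. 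Applying Theorem~\ref{Zhe6} to this direct sum and the irreducible $\mathfrak{s}$-module $\mathcal{V}$ furnishes irreducible $\mathfrak{s}_i$-modules $V_1',V_2'$ together with an $\mathfrak{s}$-isomorphism $\mathcal{V}\cong V_1'\otimes V_2'$.

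The final step is to upgrade $V_i'$ to an irreducible $\mathcal{G}_i$-module compatibly with this tensor splitting. For $g_1\in\mathcal{G}_1$, the operator $\pi(g_1)$ commutes with every element of $\pi(\mathcal{G}_2)$, in particular with the irreducible $\mathfrak{s}_2$-action on the second tensor factor. The Jacobson density theorem identifies the commutant of this $\mathfrak{s}_2$-action in $\mathrm{End}(V_1'\otimes V_2')$ with $\mathrm{End}(V_1')\otimes\mathrm{id}_{V_2'}$, so $\pi(g_1)=\rho_1(g_1)\otimes\mathrm{id}_{V_2'}$ for a unique $\rho_1(g_1)\in\mathrm{End}(V_1')$; a direct check shows $\rho_1$ is a Lie algebra homomorphism, and symmetrically one defines $\rho_2\colon \mathcal{G}_2\to \mathrm{End}(V_2')$. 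Any $\mathcal{G}_1$-invariant subspace of $V_1'$ is \emph{a fortiori} $\mathfrak{s}_1$-invariant (since $\mathfrak{s}_1=\rho_1([\mathcal{G}_1,\mathcal{G}_1])\subseteq\rho_1(\mathcal{G}_1)$ as operators on $V_1'$), and $V_1'$ is $\mathfrak{s}_1$-irreducible, so $V_1'$ is irreducible as a $\mathcal{G}_1$-module; the same holds for $V_2'$. Setting $\mathcal{V}_i=V_i'$ with this $\mathcal{G}_i$-action yields the required isomorphism.

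The genuinely non-formal step is the second one: one must verify that the direct sum decomposition of $\mathcal{G}$ survives after modding out by the kernel of $\pi$ and the one-dimensional possible center $\mathfrak{z}$. The remaining manipulations are a standard Schur/commutant calculation. The degenerate cases where some $\mathfrak{s}_i=0$ (so $\pi(\mathcal{G}_i)$ is central, $\dim V_i'=1$, and $\mathcal{G}_i$ acts on $V_i'$ by a character of its abelianization) and where $\mathfrak{z}\ne 0$ (which only twists the overall action by a scalar) are handled by the same argument without modification.
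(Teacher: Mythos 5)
Your proposal is correct and follows essentially the same route as the paper: both pass to the image of $\mathcal{G}$ in $\mathfrak{gl}(\mathcal{V})$, invoke Lemma~\ref{Prop19.1}(b) to strip off the at-most-one-dimensional center, show the semisimple part splits as a direct sum of the (commuting) pieces coming from $\mathcal{G}_1$ and $\mathcal{G}_2$, and then apply Theorem~\ref{Zhe6}. Your final step, identifying the full $\mathcal{G}_i$-action on the tensor factors via a commutant/density argument, is a more explicit justification of what the paper dispatches with ``we can easily extend $\pi(\mathcal{G}_i)$-module $\mathcal{V}_i$ into a $\mathcal{G}_i$-module,'' but it is the same argument in substance.
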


\begin{proof}
Consider   $\textrm{Ann}(\mathcal{V})=\left\{x\in \mathcal{G}:
 \mathcal{V}.x =0\right\}$  which is an ideal of
$\mathcal{G}$. Then $\mathcal{V}$ is a faithful irreducible module
over $\overline{\mathcal{G}}=\mathcal{G}/
\textrm{Ann}(\mathcal{V}),$ in particular, for an element
\(x\in \overline{\mathcal{G}}\) from \(v.x=0\) for all \(v\in
\mathcal{V}\) it follows that \(x=0.\)

Let $\rho$ be the representation of $ \overline{\mathcal{G}}$ on $\mathcal{V}$, i.e.,
$\rho(\bar g)(v)=v.\bar g$ for any $v\in \mathcal{V}$ and $\bar g\in \mathcal{G}$. Then we have the injective linear map $\rho:  \overline{\mathcal{G}}\to \mathfrak{gl}(\mathcal{V})$. Thus
$\overline{\mathcal{G}}$ is also finite-dimensional.

Applying part b) of  Lemma~\ref{Prop19.1} to   $\overline{\mathcal{G}}$  we know that $\bar{\mathcal{G}}$ is
reductive,  $Z(\bar{\mathcal{G}})\leq 1$, and  $\bar{\mathcal{G}}=[\bar{\mathcal{G}}, \bar{\mathcal{G}}]\oplus Z(\bar{\mathcal{G}})$.
Denote  the natural projections by
$\rho_1:\bar{\mathcal{G}}\to [\bar{\mathcal{G}}, \bar{\mathcal{G}}] $ and
$\rho_2:\bar{\mathcal{G}}\to Z(\bar{\mathcal{G}})$, and define
$\pi=\rho_1\circ\rho$.

Assume $Z(\overline{\mathcal{G}})=\mathbb{C}z\ne0$. Since \([[v, x],
z]=[[v, z], x]\) for all \(x\in \overline{\mathcal{G}}\) and
\(v\in \mathcal{V},\) it follows that the mapping
\[
v\in \mathcal{V} \hookrightarrow [v, z] \in \mathcal{V}
\]
is a $\mathfrak{\overline{\mathcal{G}}}$-module homomorphism of
$\mathcal{V}.$  Taking into account that  $\mathcal{V}$ is an
irreducible $\mathfrak{\overline{\mathcal{G}}}$-module, by Lemma \ref{schur}  there exists \(\lambda \in \mathbb{C}\) such that
\([v, z]=\lambda v\) for all \(v\in \mathcal{V}.\)

We know that    $\mathcal{V}$ is a simple module over $\pi({\mathcal{G}})=[\bar{\mathcal{G}}, \bar{\mathcal{G}}]$.
If $\pi(\mathcal{G}_1) \cap \pi(\mathcal{G}_2)\ne\{0\}$, it has to be semisimple. We see that
$$\pi(\mathcal{G}_1) \cap\pi( \mathcal{G}_2)=[\pi(\mathcal{G}_1) \cap\pi( \mathcal{G}_2),\pi(\mathcal{G}_1 )\cap\pi( \mathcal{G}_2)\}]\subset [\pi(\mathcal{G}_1),\pi( \mathcal{G}_2)\}]=0,$$ a contradiction. Thus  $\pi(\mathcal{G}_1) \cap \pi(\mathcal{G}_2)=\overline{0},$ and
$\pi({\mathcal{G}})=\pi(\mathcal{G}_1) \oplus
\pi(\mathcal{G}_2)$.

Since
$\mathcal{V}$ is also an irreducible module over
$ \pi(\mathcal{G}_1)\oplus  \pi(\mathcal{G}_2)$, then both $ \pi(\mathcal{G}_1)$ and $  \pi(\mathcal{G}_2)$ are semisimple. By Theorem~\ref{Cor 21.2}, we deduce that  there are  finite-dimensional irreducible $\pi( \mathcal{G}_i)$-modules $\mathcal{V}_i$ so that
$
\mathcal{V} \cong
\mathcal{V}_{ 1}\otimes \mathcal{V}_{ 2}$. We can easily extend $\pi( \mathcal{G}_i)$-module $\mathcal{V}_i$
into a $ \mathcal{G}_i$-module. This completes the proof.
\end{proof}

 We say
that a semisimple Leibniz algebra
\(\mathcal{L}=\mathcal{S}\dot{+}\mathcal{I}\) is
\textit{decomposable}, if
\(\mathcal{L}=\left(\mathcal{S}_1\dot{+}\mathcal{I}_1\right)\oplus
\left(\mathcal{S}_2\dot{+}\mathcal{I}_2\right),\) where $\mathcal{S}_1\dot{+}\mathcal{I}_1$ and
$\mathcal{S}_2\dot{+}\mathcal{I}_2$ are non-trivial semisimple Leibniz
algebras. Otherwise, we say that \(\mathcal{L}\) is
\textit{indecomposable.}

\begin{lm}\label{decompos} Any   semisimple Leibniz algebra has the form:
\[
\mathcal{L}=\bigoplus\limits_{i=1}^n \left(\mathcal{S}_i\dot{+}
\mathcal{I}_i\right),
\]
where \(\mathcal{S}_i\dot{+}\mathcal{I}_i\) is an indecomposable
semisimple Leibniz algebra for all \(i\in \{1, \dots, n\}.\)
\end{lm}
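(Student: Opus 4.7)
The plan is to prove the statement by a straightforward induction on $\dim \mathcal{L}$, using the dichotomy built into the definition of decomposability that immediately precedes the lemma. The induction is natural because a nontrivial decomposition splits $\mathcal{L}$ into two summands of strictly smaller dimension, each of which is by hypothesis still a (nontrivial) semisimple Leibniz algebra, so the inductive hypothesis applies to both summands.

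First I would dispose of the base/indecomposable case: if $\mathcal{L}$ itself is indecomposable, take $n=1$, $\mathcal{S}_1=\mathcal{S}$, $\mathcal{I}_1=\mathcal{I}$, and there is nothing further to prove. The base of the induction (for instance $\dim \mathcal{L}$ minimal among semisimple Leibniz algebras, or $\mathcal{L}$ simple in the sense of the definition given earlier) falls into this case automatically, since such an $\mathcal{L}$ admits no nontrivial decomposition.

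For the inductive step, assume $\mathcal{L}$ is decomposable. By the definition, we may write
\[
\mathcal{L} = \bigl(\mathcal{S}_1'\dot+\mathcal{I}_1'\bigr) \oplus \bigl(\mathcal{S}_2'\dot+\mathcal{I}_2'\bigr),
\]
where each summand $\mathcal{S}_j'\dot+\mathcal{I}_j'$ is a nonzero semisimple Leibniz algebra. In particular each summand has dimension strictly less than $\dim \mathcal{L}$, so by the induction hypothesis each can be written as a direct sum of indecomposable semisimple Leibniz algebras. Concatenating these two decompositions produces the desired expression
\[
\mathcal{L} = \bigoplus_{i=1}^n \bigl(\mathcal{S}_i\dot+\mathcal{I}_i\bigr)
\]
with every $\mathcal{S}_i\dot+\mathcal{I}_i$ indecomposable.

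There is no real obstacle here: the only point one must be careful about is that in the inductive step the two summands inherit the semisimple Leibniz structure (their liezations are $\mathcal{S}_1'$ and $\mathcal{S}_2'$, which are semisimple because $\mathcal{S} = \mathcal{S}_1'\oplus\mathcal{S}_2'$ is), but this is exactly what is built into the definition of \emph{decomposable} stated just before the lemma. Uniqueness of the decomposition is not asserted in the statement, so I would not try to prove it. Thus the whole argument is a one-line induction on dimension once the definition is unpacked.
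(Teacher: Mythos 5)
Your proof is correct. It is the same split--or--stop induction as the paper's, but you run it on $\dim \mathcal{L}$, whereas the paper inducts on the number $n$ of simple ideals in the Levi factor $\mathcal{S}=\bigoplus_{i=1}^n\mathcal{S}_i$ and makes the decomposition explicit: for a partition $\{1,\dots,n\}=A\cup B$ it sets $\mathcal{I}_A=\left[\mathcal{I},\bigoplus_{i\in A}\mathcal{S}_i\right]$ and $\mathcal{I}_B=\left[\mathcal{I},\bigoplus_{j\in B}\mathcal{S}_j\right]$, and splits into the two cases $\mathcal{I}_A\cap\mathcal{I}_B\neq\{0\}$ for every nontrivial partition (so $\mathcal{L}$ is indecomposable) versus $\mathcal{I}_A\cap\mathcal{I}_B=\{0\}$ for some partition, in which case $\mathcal{L}=\left(\bigoplus_{i\in A}\mathcal{S}_i\dot{+}\mathcal{I}_A\right)\oplus\left(\bigoplus_{j\in B}\mathcal{S}_j\dot{+}\mathcal{I}_B\right)$ and induction applies to each summand. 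Your version is shorter and uses nothing beyond the definition of decomposability, which is all the lemma's statement requires; the paper's version buys a concrete criterion for decomposability in terms of how $\mathcal{I}$ sits over the simple Lie factors, and that extra structural information is what gets reused later (notably in the connectedness argument of Theorem 3.5(e)). Either argument establishes the lemma; just make sure, as you do, that the inductive hypothesis legitimately applies to both summands because the definition of decomposable forces each to be a nonzero semisimple Leibniz algebra of strictly smaller dimension.
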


\begin{proof}
Let \(\mathcal{L}=\mathcal{S}\dot{+} \mathcal{I}\) be a semisimple
Leibniz algebra and let \(\mathcal{S}=\bigoplus\limits_{i=1}^n
\mathcal{S}_i\) be a decomposition   of simple Lie
ideals $\mathcal{S}_i$. We know that $[\mathcal{I}, \mathcal{S}]=\mathcal{I}$.  The proof is by induction on \(n.\)

Let  \(n=1,\) that is, \(\mathcal{S}\) is a simple Lie algebra.
Then \(\mathcal{L}=\mathcal{S}\dot{+} \mathcal{I}\) is an
indecomposable semisimple Leibniz algebra.

Suppose that the assertion of the theorem is true for all numbers
least than \(n\) and \(\mathcal{S}=\bigoplus\limits_{i=1}^n
\mathcal{S}_i.\)

Consider a partition of the set \(\{1, 2, \dots, n\}=A\cup B\) into union of
disjoint subsets. Set
\begin{center}
\(\mathcal{I}_A=\left[\mathcal{I}, \bigoplus\limits_{i\in A}
\mathcal{S}_i\right]\) and \(\mathcal{I}_B=\left[\mathcal{I},
\bigoplus\limits_{j\in B} \mathcal{S}_j\right]. \)
\end{center}

Case 1. Let \(\mathcal{I}_A\cap \mathcal{I}_B\neq \{0\}\) for any
non trivial partition \(A\cup B\) of the set \(\{1, 2, \dots, n\}.\) In this
case \(\mathcal{L}\) is indecomposable.

Case 2. Let \(\mathcal{I}_A\cap \mathcal{I}_B=\{0\}\) for some  partition \(A\cup B\) of the set \(\{1, 2, \dots, n\}.\) Then
\(\mathcal{L}\) is decomposable and
\[
\mathcal{L}=\left(\bigoplus\limits_{i\in A}
\mathcal{S}_i\dot{+}\mathcal{I}_A\right)\oplus
\left(\bigoplus\limits_{j\in B}
\mathcal{S}_j\dot{+}\mathcal{I}_B\right).
\]
By the hypothesis of the induction algebras
\(\bigoplus\limits_{i\in A} \mathcal{S}_i\dot{+}\mathcal{I}_A\)
and \(\bigoplus\limits_{j\in B}
\mathcal{S}_j\dot{+}\mathcal{I}_B\) can be represented as a direct
sum of indecomposable algebras.  The proof is complete.
\end{proof}

Further we need the following auxiliary result.

\begin{lm}\label{lm00}
Let \(\mathcal{S}\) be a semisimple Lie algebra and let
\(\mathcal{I}\) be an irreducible \(\mathcal{S}\)-module. Then
\([\mathcal{I}, \mathcal{S}]=0\) if and only if \(\dim
\mathcal{I}=1.\)
\end{lm}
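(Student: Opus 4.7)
The statement is an easy equivalence, and both directions essentially reduce to standard facts about semisimple Lie algebras acting on one-dimensional spaces.

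For the implication $\dim\mathcal{I}=1 \Rightarrow [\mathcal{I},\mathcal{S}]=0$, I would argue as follows. Any one-dimensional (right) $\mathcal{S}$-module is given by a Lie algebra homomorphism $\mathcal{S}\to\mathfrak{gl}_1(\mathbb{C})\cong\mathbb{C}$, which is abelian. Hence this homomorphism vanishes on $[\mathcal{S},\mathcal{S}]$. Since $\mathcal{S}$ is semisimple we have $\mathcal{S}=[\mathcal{S},\mathcal{S}]$, so the action is identically zero, i.e. $[\mathcal{I},\mathcal{S}]=0$.

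For the converse, assume $[\mathcal{I},\mathcal{S}]=0$. Then the $\mathcal{S}$-action on $\mathcal{I}$ is trivial, so every subspace of $\mathcal{I}$ is an $\mathcal{S}$-submodule. Pick any nonzero $v\in\mathcal{I}$; then $\mathbb{C}v$ is a nonzero submodule, and by irreducibility of $\mathcal{I}$ we must have $\mathcal{I}=\mathbb{C}v$, i.e. $\dim\mathcal{I}=1$.

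There is essentially no obstacle here; the only point that requires a moment's care is to recall that irreducibility means there are no \emph{proper nonzero} submodules (so the argument in the converse direction genuinely forces one-dimensionality rather than permitting the zero module), and to invoke $\mathcal{S}=[\mathcal{S},\mathcal{S}]$ for semisimple $\mathcal{S}$ in the forward direction. Both facts are standard and already used implicitly elsewhere in the paper.
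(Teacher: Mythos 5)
Your proof is correct, and both directions are argued soundly: the forward direction uses only that $\mathfrak{gl}_1(\mathbb{C})$ is abelian together with $\mathcal{S}=[\mathcal{S},\mathcal{S}]$ for semisimple $\mathcal{S}$, and the converse uses only that a trivial action makes every subspace a submodule. The paper's own proof is terser and takes a slightly different route: it forms the representation $\rho:\mathcal{S}\to\mathfrak{gl}(\mathcal{I})$, observes that $\rho(\mathcal{S})$ is a Lie subalgebra of $\mathfrak{gl}(\mathcal{I})$, and then simply cites its Lemma~2.4 (Humphreys' Proposition~19.1 on Lie algebras acting irreducibly, which gives reductivity of $\rho(\mathcal{S})$ with center of dimension at most one). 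Your argument is more elementary and more self-contained: it does not need the reductivity result at all, and it makes explicit the two small facts (perfectness of $\mathcal{S}$, and the definition of irreducibility) that the paper's one-line citation leaves implicit. The paper's approach buys brevity and consistency with machinery it has already set up for Theorem~3.1; yours buys transparency, and in particular handles cleanly the degenerate-looking case $\rho(\mathcal{S})=0$, where the cited lemma (which assumes the acting algebra is nonzero) does not literally apply. No gaps.
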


\begin{proof} Let $\rho$ be the representation of ${\mathcal{S}}$ on $\mathcal{I}$, i.e.,
$\rho( g)(v)=[v,g]$ for any $v\in \mathcal{I}$ and $ g\in \mathcal{S}$. Then we have the Lie algebra homomorphism  $\rho:   {\mathcal{S}}\to \mathfrak{gl}(\mathcal{I})$. So $\rho( {\mathcal{S}})$ is a subalgebra of $ \mathfrak{gl}(\mathcal{I})$. The result follows from Lemma 2.4.
\end{proof}

\begin{rem}\label{rem23} From Lemma~\ref{lm00} we conclude that for an indecomposable semisimple Leibniz algebra \(\mathcal{L}=\mathcal{S}\dot{+}\mathcal{I}\) the inequality \(\dim \mathcal{I}\geq 2\) holds true if $\mathcal{I}\ne0$.
\end{rem}

From Lemma \ref{decompos}, we need only to determine the structure
of indecomposable semisimple Leibniz algebras. Suppose
\(\mathcal{L}\) is  an indecomposable semisimple Leibniz algebra
with a given Levi decomposition
$\mathcal{L}=\mathcal{S}\dot{+}\mathcal{I}.$ We may assume that \(
\mathcal{S}=\oplus _{i=1}^m \mathcal{S}_i\) and
\(\mathcal{I}=\oplus _{i=1}^n \mathcal{I}_i\)  where each
\(\mathcal{S}_i\) is a simple Lie algebra and each
\(\mathcal{I}_i\) is an irreducible $S$-module. We say that
\(\mathcal{S}_i\)  and  \(\mathcal{S}_j\) are {\it  adjacent}  if
there exists  \(\mathcal{I}_k\) such that $ [\mathcal{I}_k,
\mathcal{S}_i]= [\mathcal{I}_k, \mathcal{S}_j]=\mathcal{I}_k$. We
say that  \(\mathcal{S}_i\)  and  \(\mathcal{S}_j\) are  {\it
connected} if there exist  \(\mathcal{S}_{k_1}=\mathcal{S}_i,\
\mathcal{S}_{k_2}, \cdots, \mathcal{S}_{k_r}=\mathcal{S}_j\) such
that  \(\mathcal{S}_{k_l}\) and \(\mathcal{S}_{k_{l+1}}\) are
adjacent.

Now we can prove our main result on indecomposable semisimple Leibniz algebras.

\begin{thm} \label{thm25}
Let \(\mathcal{L}=\mathcal{S}\dot{+}\mathcal{I}\)   be an
indecomposable semisimple Leibniz algebra with
$\mathcal{I}\ne\{0\}$. Then
\begin{itemize}
\item[(a).]   \( \mathcal{S}=\oplus _{i=1}^m \mathcal{S}_i\) where each \(\mathcal{S}_i\) is a simple Lie algebra;

\item[(b).]  \(\mathcal{I}=\oplus _{i=1}^n \mathcal{I}_i\)  where each
\(\mathcal{I}_i\) is an irreducible $\mathcal{S}$-module with
$[\mathcal{I}_i,\mathcal{S} ]=\mathcal{I}_i$;
\item[(c).]  \(\mathcal{L}=\mathcal{S}\ltimes \mathcal{I}\);
\item[(d).] for any $1\le i\le m$ and $1\le j\le n$ there
is an irreducible $\mathcal{S}_i$-module $\mathcal{I}_{ij}$ such
that $\mathcal{I}_i=\otimes_{j=1}^m\mathcal{I}_{ij}$;
\item[(e).] any $\mathcal{S}_i$ and $\mathcal{S}_j$ are connected.
\end{itemize}
\end{thm}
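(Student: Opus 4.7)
The plan is to dispatch (a)--(d) directly from the results already in hand, and to spend most of the effort on (e). Part (a) is nothing but the classical structure theorem for semisimple Lie algebras applied to $\mathcal{S}$, which is semisimple by the definition of a semisimple Leibniz algebra. For (b), apply Weyl's complete reducibility theorem to $\mathcal{I}$ viewed as an $\mathcal{S}$-module to obtain $\mathcal{I}=\oplus_{j=1}^n\mathcal{I}_j$ with each $\mathcal{I}_j$ irreducible. Since $[\mathcal{I}_j,\mathcal{S}]$ is an $\mathcal{S}$-submodule of $\mathcal{I}_j$, it is either $0$ or $\mathcal{I}_j$. If it were $0$, Lemma~\ref{lm00} would force $\dim\mathcal{I}_j=1$; but then using $[\mathcal{L},\mathcal{I}]=0$ (which also gives $[\mathcal{I},\mathcal{I}]=0$) the one-dimensional piece $\mathcal{I}_j$ would be a two-sided ideal on which every bracket vanishes, so $\mathcal{L}=\mathcal{I}_j\oplus(\mathcal{S}\dot{+}(\oplus_{k\ne j}\mathcal{I}_k))$ would be a non-trivial ideal decomposition, contradicting indecomposability.

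For (c), the brackets in $\mathcal{L}$ split into four pieces: $[\mathcal{S},\mathcal{S}]\subseteq\mathcal{S}$ is the ambient Lie bracket, $[\mathcal{I},\mathcal{S}]\subseteq\mathcal{I}$ is the module action, while $[\mathcal{S},\mathcal{I}]$ and $[\mathcal{I},\mathcal{I}]$ both lie inside $[\mathcal{L},\mathcal{I}]=0$. This is exactly the multiplication of $\mathcal{S}\ltimes\mathcal{I}$ given by Definition~\ref{construction}. For (d), note that $\dim\mathcal{I}_j\ge 2$ by Remark~\ref{rem23} and $(b)$, so Theorem~\ref{twotensor} applies; I would induct on $m$ by peeling off $\mathcal{S}_1$ from $\mathcal{S}=\mathcal{S}_1\oplus(\oplus_{i\ge 2}\mathcal{S}_i)$ at each step, writing $\mathcal{I}_j\cong V\otimes W$ with $V$ an irreducible $\mathcal{S}_1$-module and $W$ an irreducible $(\oplus_{i\ge 2}\mathcal{S}_i)$-module, and recursing on $W$.

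The main obstacle is (e), which I would settle by contrapositive. Suppose the connectivity relation partitions $\{1,\dots,m\}$ into two non-empty classes $A$ and $B$. For each irreducible summand $\mathcal{I}_k$, set $\Lambda_k=\{i:[\mathcal{I}_k,\mathcal{S}_i]\ne 0\}$. By (b) we have $[\mathcal{I}_k,\mathcal{S}]=\mathcal{I}_k$, so $\Lambda_k$ is non-empty; moreover for any $i,i'\in\Lambda_k$ the submodules $[\mathcal{I}_k,\mathcal{S}_i]$ and $[\mathcal{I}_k,\mathcal{S}_{i'}]$ are both forced to equal $\mathcal{I}_k$, so $\mathcal{S}_i$ and $\mathcal{S}_{i'}$ are adjacent and therefore sit in the same connectivity class. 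Hence each $\Lambda_k$ lies entirely in $A$ or entirely in $B$, and this attachment regroups $\mathcal{I}=\mathcal{I}_A\oplus\mathcal{I}_B$. Setting $\mathcal{S}_X=\oplus_{i\in X}\mathcal{S}_i$ and $\mathcal{L}_X=\mathcal{S}_X\dot{+}\mathcal{I}_X$ for $X\in\{A,B\}$, the delicate verification is that $\mathcal{L}_A$ and $\mathcal{L}_B$ are two-sided ideals; this reduces to checking $[\mathcal{I}_A,\mathcal{S}_B]=0=[\mathcal{I}_B,\mathcal{S}_A]$, which is immediate from the construction of $\Lambda_k$, together with the easy facts $[\mathcal{S}_X,\mathcal{S}_Y]=0$ for $X\ne Y$ and $[\mathcal{S},\mathcal{I}]=[\mathcal{I},\mathcal{I}]=0$. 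The resulting non-trivial decomposition $\mathcal{L}=\mathcal{L}_A\oplus\mathcal{L}_B$ contradicts the indecomposability of $\mathcal{L}$, so all pairs $\mathcal{S}_i,\mathcal{S}_j$ must be connected.
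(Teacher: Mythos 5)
Your proposal is correct, and for part (e) it takes a genuinely different route from the paper. The paper dispatches (a)--(d) in one line (they are "clear", follow from the definition, or from Theorem~\ref{Zhe6}), so your filled-in arguments there are just the intended ones made explicit; note only that the cleanest source for $[\mathcal{I}_j,\mathcal{S}]=\mathcal{I}_j$ is the Proposition in Section~2 stating $[\mathcal{I},\mathcal{S}]=\mathcal{I}$ (which holds because every square $[s+v,s+v]$ equals $[v,s]$), since a one-dimensional trivial summand is not literally a "non-trivial semisimple Leibniz algebra" and so does not formally violate the paper's definition of indecomposability; and in your recursive peeling for (d) the remaining factor $W$ may become one-dimensional, in which case Theorem~\ref{twotensor} no longer applies but the remaining tensor factors can simply be taken trivial. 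The real divergence is in (e): you argue by contrapositive, observing that each $\Lambda_k=\{i:[\mathcal{I}_k,\mathcal{S}_i]\ne 0\}$ consists of mutually adjacent indices (since $[\mathcal{I}_k,\mathcal{S}_i]$ is an $\mathcal{S}$-submodule of the irreducible $\mathcal{I}_k$, hence $0$ or all of it) and therefore lies in a single connectivity class, so a disconnected adjacency graph regroups $\mathcal{L}$ into a non-trivial direct sum of ideals $\mathcal{L}_A\oplus\mathcal{L}_B$. The paper instead inducts on the number $n$ of irreducible summands of $\mathcal{I}$, removing $\mathcal{I}_n$, decomposing $\mathcal{S}\dot+\mathcal{J}$ into indecomposable pieces via Lemma~\ref{decompos}, and using $\mathcal{I}_n$ to glue those pieces back together. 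Both arguments are sound; yours is more direct and self-contained (a single global partition argument), while the paper's reuses its decomposition lemma and makes visible how the last summand $\mathcal{I}_n$ links the indecomposable blocks.
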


\begin{proof} Parts (a) and (b) are clear,
(c) follows from the definition, (d) follows from
Theorem~\ref{Zhe6}.

(e) The proof is by induction on \(n.\)

Let \(n=1.\) Then \(\mathcal{I}\) is an irreducible
\(\mathcal{S}\)-module. By Theorem~\ref{Zhe6} for each $j\in \{1,
\ldots, m\}$ there is an irreducible $\mathcal{S}_j$-module
$\mathcal{J}_{j}$ such that
$\mathcal{I}=\otimes_{j=1}^m\mathcal{J}_{j}.$ Since
\(\mathcal{S}\dot + \mathcal{I}\) is indecomposable,
Lemma~\ref{lm00} implies that \([\mathcal{J}_{j},
\mathcal{S}_j]=\mathcal{J}_{j}\) for all \(j.\)  Thus
\(\left[\mathcal{I}, \mathcal{S}_j\right]=\mathcal{I}\) for all
\(j.\) This means that \(\mathcal{S}_i\)  and  \(\mathcal{S}_j\)
are adjacent.

Suppose that the assertion (e)  is true for all numbers least than
\(n>1.\) Consider a Leibniz algebra \(\mathcal{S}\dot +
\mathcal{J},\) where \(\mathcal{J}=\oplus _{i=1}^{n-1}
\mathcal{I}_i.\) Assume that \(\mathcal{S}\dot + \mathcal{J}\) is
indecomposable. By the hypothesis of the induction,
\(\mathcal{S}_i\) and \(\mathcal{S}_j\) are  already connected.

Now assume that \(\mathcal{S}\dot + \mathcal{J}\) is decomposable.
Let us consider the decomposition of \(\mathcal{S}\dot + \mathcal{J}\) into a direct sum of indecomposable semisimple
Leibniz algebras: \(\bigoplus\limits_{t=1}^p\left(\oplus_{i\in
A_t}\mathcal{S}_i \dot + \oplus _{s\in B_t}\mathcal{I}_s\right),\)
where \(A_1,\ldots ,  A_p\) and \(B_1,\ldots,  B_p\) are
partitions of \(\{1, \ldots, m\}\) and \(\{1, \ldots n-1\},\)
respectively. Since \(\mathcal{L}=\mathcal{S}\dot{+}\mathcal{I}\)
is indecomposable, for every \(s\in \{1, \ldots, p\}\) there
exists an index \(i_s\in A_s\) such that \([\mathcal{I}_n,
\mathcal{S}_{i_s}]=\mathcal{I}_n.\) Thus \(\mathcal{S}_{i_1},
\ldots, \mathcal{S}_{i_p}\) are mutually adjacent. On the other
hand, since \(\oplus_{i\in A_t}\mathcal{S}_i \dot + \oplus _{s\in
B_t}\mathcal{I}_s\) is indecomposable, by the hypothesis of the
induction, for each \(i, j \in A_s\, (1\leq s \leq p)\) algebras
\(\mathcal{S}_i\) and \(\mathcal{S}_j\) are connected. Thus
\(\mathcal{S}_i\) and \(\mathcal{S}_j\) are connected for every
\(i, j\in \{1, \ldots, m\}.\) The proof is complete.
\end{proof}

In our notations we present an example that generalizes the  one   in \cite[Theorem 4.2]{example} as a
semisimple Leibniz algebra which can not be decomposed into the direct sum
of simple ideals.

\begin{exam} \label{ex01} Let $m,n\in\mathbb{N}$.
Let  $\mathcal{L}=\left(\mathfrak{sl}_2^1\bigoplus
\mathfrak{sl}_2^2\right)\dot{+}\mathcal{I}$ be  a semisimple Leibniz algebra with a basis
$$\{e_1, h_1, f_1,e_2, h_2, f_2, x_i\otimes y_j : i=0, 1, \cdots, m; j=0, 1, \cdots, n \}$$ and the product table:
$$  \begin{array}{ll}
[e_i,h_i]=-[h_i,e_i]=2e_i,  \\[1mm]
[h_i,f_i]=-[f_i,h_i]=2f_i, \\[1mm]
[e_i,f_i]=-[f_i,e_i]=h_i,\\[1mm]
[x_i\otimes y_j,h_1]=(m-2i)x_i\otimes y_j, \\[1mm]
[x_i\otimes y_j,h_2]=(n-2j)x_i\otimes y_j,  \\[1mm]
[x_i\otimes y_j,e_1]=ix_{i+1}\otimes y_j, \\[1mm]
[x_i\otimes y_j,e_2]=jx_i\otimes y_{j+1}, \\[1mm]
[x_i\otimes y_j,f_1]=(m-i)x_{i-1}\otimes y_j,\\[1mm]
[x_i\otimes y_j,f_2]=(n-j)x_i\otimes y_j, \\[1mm]
\end{array}
$$
where $0\leq i\leq n$ and  $0\leq j\leq m$. If $V_1$ is the
$(m+1)$-dimensional irreducible $\mathfrak{sl}_2^1$-module and
$V_2$ is the $(n+1)$-dimensional irreducible
$\mathfrak{sl}_2^2$-module, then
$\mathcal{L}=\left(\mathfrak{sl}_2^1\bigoplus
\mathfrak{sl}_2^2\right)\ltimes (V_1\otimes V_2)$.
\end{exam}

\section{Derivations on semisimple Leibniz algebras}

In this section we describe all derivations of semisimple Leibniz
algebras. We will use  ${\rm{Der}}(\mathcal{L})$ to denote the Lie
algebra of derivations of a  Leibniz algebra $\mathcal{L}.$

\begin{lm}\label{lmZ1}
Let $\mathcal{L}=\mathcal{L}_1\oplus \mathcal{L}_2$ be a Leibniz
algebra such that $[\mathcal{L}_i, \mathcal{L}_i]=\mathcal{L}_i.$
Then
${\rm{Der}}(\mathcal{L})={\rm{Der}}(\mathcal{L}_1)\oplus{\rm{Der}}(\mathcal{L}_2).$
\end{lm}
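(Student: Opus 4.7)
The plan is to prove the two inclusions separately, with the nontrivial direction being that every derivation of $\mathcal{L}$ respects the decomposition $\mathcal{L}=\mathcal{L}_1\oplus\mathcal{L}_2$. Note first that in a direct sum of ideals one has $[\mathcal{L}_1,\mathcal{L}_2]=[\mathcal{L}_2,\mathcal{L}_1]=0$, a fact that will be used throughout.

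For the easy inclusion, given $D_i\in\mathrm{Der}(\mathcal{L}_i)$ for $i=1,2$, extending each $D_i$ by zero on the other summand produces a linear map on $\mathcal{L}$ that satisfies the Leibniz rule on cross brackets trivially (both sides vanish) and on same-summand brackets by hypothesis. Thus $D_1+D_2\in\mathrm{Der}(\mathcal{L})$, giving an embedding $\mathrm{Der}(\mathcal{L}_1)\oplus\mathrm{Der}(\mathcal{L}_2)\hookrightarrow\mathrm{Der}(\mathcal{L})$, and the images clearly commute so this is a direct sum of Lie algebras.

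For the reverse inclusion, let $D\in\mathrm{Der}(\mathcal{L})$ and fix $x,y\in\mathcal{L}_1$. Writing $D(x)=a+b$ and $D(y)=c+d$ with $a,c\in\mathcal{L}_1$ and $b,d\in\mathcal{L}_2$, the Leibniz rule gives
\[
D([x,y])=[D(x),y]+[x,D(y)]=[a,y]+[x,c],
\]
because $[b,y]$ and $[x,d]$ vanish by the remark above. Hence $D([x,y])\in\mathcal{L}_1$. Now the hypothesis $[\mathcal{L}_1,\mathcal{L}_1]=\mathcal{L}_1$ kicks in: every $z\in\mathcal{L}_1$ is a finite sum of brackets of elements of $\mathcal{L}_1$, so $D(z)\in\mathcal{L}_1$ and therefore $D(\mathcal{L}_1)\subseteq\mathcal{L}_1$. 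The symmetric argument yields $D(\mathcal{L}_2)\subseteq\mathcal{L}_2$. Consequently $D=D|_{\mathcal{L}_1}+D|_{\mathcal{L}_2}$ with each restriction a derivation of the corresponding ideal, finishing the proof.

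There is no real obstacle here; the whole content is the observation that the Leibniz rule together with $[\mathcal{L}_1,\mathcal{L}_2]=0$ forces $D([x,y])$ to stay inside $\mathcal{L}_1$ when $x,y\in\mathcal{L}_1$, after which the hypothesis $[\mathcal{L}_i,\mathcal{L}_i]=\mathcal{L}_i$ (which is essential—without it, $D$ could carry a generator of $\mathcal{L}_1$ outside $\mathcal{L}_1$) propagates this to all of $\mathcal{L}_i$. The only mildly delicate point worth flagging explicitly in the writeup is this necessity of the perfectness hypothesis, since it is what allows one to pass from brackets to arbitrary elements.
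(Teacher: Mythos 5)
Your proof is correct and is exactly the standard argument the paper has in mind (its own proof is omitted with the remark that it is ``similar to the Lie algebra case''): the cross brackets $[\mathcal{L}_1,\mathcal{L}_2]=[\mathcal{L}_2,\mathcal{L}_1]=0$ force $D([x,y])\in\mathcal{L}_1$ for $x,y\in\mathcal{L}_1$, and perfectness $[\mathcal{L}_i,\mathcal{L}_i]=\mathcal{L}_i$ upgrades this to $D(\mathcal{L}_i)\subseteq\mathcal{L}_i$. Your remark that the perfectness hypothesis is genuinely needed is also accurate.
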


\begin{proof}
This is similar to that of the Lie algebra case. We omit the details.
\end{proof}

From this lemma, in order to determine ${\rm{Der}}(\mathcal{L})$
for  semisimple  Leibniz algebras $\mathcal{L}$ we need only to
consider indecomposable semisimple  Leibniz algebras
$\mathcal{L}.$ From now on in this section we assume that
$\mathcal{L}=\mathcal{S}\dot + \mathcal{I}$ is an indecomposable
semisimple Leibniz algebra. We assume that
\begin{equation}\label{decss}
\mathcal{S}=\bigoplus\limits_{i=1}^rn_i \mathcal{S}_i
\end{equation}
is a
decomposition into the sum of  simple Lie ideals where  $S_i\not\simeq S_j$ if $i\ne j$; and that
\begin{equation}\label{decii}
\mathcal{I}=\bigoplus\limits_{i=1}^{r+s}m_i \mathcal{I}_i
\end{equation} is a
decomposition into the sum of  simple $\mathcal{S}$-modules where
$\mathcal{I}_i\not\simeq \mathcal{I}_j$ if $i\ne j.$  We may
further assume that $\mathcal{S}_i\simeq \mathcal{I}_i$ as
$\mathcal{S}$-modules and \(m_i\geq 0\) for
$i\in\{1,2,\cdots,r\}$.

 For any $x\in \mathcal{S}$ we have the inner derivation
 $$
 R_x: \mathcal{L}\to \mathcal{L}, \,\, y\mapsto [y,x], \forall \ y\in
 \mathcal{L}.
 $$
 Let $R_S=\{R_x:x\in \mathcal{S}\}$ be the inner derivations of $\mathcal{L}$, and
 $$
 \aligned
 {\rm{Der}}(L)_{\mathcal{S}, \mathcal{I}}
 &=\{d\in  {\rm{Der}}(\mathcal{L}):d(\mathcal{S})\subset \mathcal{I}, d(\mathcal{I})=0\},\\
 {\rm{Der}}(\mathcal{L})_{\mathcal{I}, \mathcal{I}}
 &=\{d\in  {\rm{Der}}(\mathcal{L}):d(\mathcal{I})\subset \mathcal{I}, d(\mathcal{S})=0\}.\endaligned
 $$

\begin{lm} \label{lmZ1}
Let $\mathcal{L}=\mathcal{S}\dot + \mathcal{I}$ be a   semisimple
Leibniz algebra. Then
$$
{\rm{Der}}(\mathcal{L})=R_\mathcal{S}\dot
+{\rm{Der}}(\mathcal{L})_{\mathcal{S}, \mathcal{I}}\dot
+{\rm{Der}}(\mathcal{L})_{\mathcal{I}, \mathcal{I}}.
$$
\end{lm}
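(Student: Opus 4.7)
The plan is to decompose any derivation $d$ via the vector-space splitting $\mathcal{L}=\mathcal{S}\dot+\mathcal{I}$, then absorb an inner derivation to kill the $\mathcal{S}\to\mathcal{S}$ component, and finally separate the residual action on $\mathcal{S}$ from the action on $\mathcal{I}$.

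First I would write $d=d_{SS}+d_{SI}+d_{IS}+d_{II}$, where $d_{XY}:X\to Y$ is obtained from $d|_X$ by projecting onto $Y$. A quick check rules out the off-diagonal piece $d_{IS}$: applying the Leibniz rule to $[s,v]=0$ for $s\in\mathcal{S}$, $v\in\mathcal{I}$ (which holds because $[\mathcal{L},\mathcal{I}]=0$) and projecting the result onto $\mathcal{S}$ forces $[s,d_{IS}(v)]=0$ for all such pairs, so $d_{IS}(v)\in Z(\mathcal{S})=0$. Similarly, projecting $d[s_1,s_2]=[d(s_1),s_2]+[s_1,d(s_2)]$ onto $\mathcal{S}$ (the $d_{SI}$ contributions land in $\mathcal{I}$ because $[\mathcal{S},\mathcal{I}]=0$) shows that $d_{SS}$ is a derivation of the semisimple Lie algebra $\mathcal{S}$. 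All such derivations are inner, so $d_{SS}(y)=[y,x]$ for some $x\in\mathcal{S}$, and $d'=d-R_x$ then satisfies $d'(\mathcal{S})\subset\mathcal{I}$ and $d'(\mathcal{I})\subset\mathcal{I}$.

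Next I would split $d'$ by defining $d_2\in\mathrm{End}(\mathcal{L})$ via $d_2|_\mathcal{S}=0$ and $d_2|_\mathcal{I}=d'|_\mathcal{I}$, and verifying $d_2$ is a derivation of $\mathcal{L}$. The four cases $\mathcal{S}\times\mathcal{S}$, $\mathcal{S}\times\mathcal{I}$, $\mathcal{I}\times\mathcal{S}$, $\mathcal{I}\times\mathcal{I}$ each collapse using $[\mathcal{S},\mathcal{I}]=[\mathcal{I},\mathcal{I}]=0$ and $d'(\mathcal{S})\subset\mathcal{I}$ to kill the unwanted mixed terms; the only slightly nontrivial case is $\mathcal{I}\times\mathcal{S}$, which uses the Leibniz rule for $d'$ together with $[v,d'(s)]\in[\mathcal{I},\mathcal{I}]=0$. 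Setting $d_1=d'-d_2$ then yields a derivation in $\mathrm{Der}(\mathcal{L})_{\mathcal{S},\mathcal{I}}$ by construction, so $d=R_x+d_1+d_2$.

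Finally, the directness of the sum follows by a short bootstrap: if $R_x+d_1+d_2=0$, restricting to $\mathcal{S}$ and projecting back onto $\mathcal{S}$ gives $[y,x]=0$ for all $y\in\mathcal{S}$, so $x\in Z(\mathcal{S})=0$; then $d_1|_\mathcal{S}=0$, forcing $d_1=0$, and hence $d_2=0$. The main obstacle is the case-by-case verification that $d_2$ is genuinely a derivation, but every subcase reduces to the triviality $[\mathcal{S},\mathcal{I}]=0$ or $[\mathcal{I},\mathcal{I}]=0$; the rest is routine linear bookkeeping within the fixed vector-space decomposition.
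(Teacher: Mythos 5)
Your proof is correct and complete. The paper itself offers no details for this lemma---it only defers to \cite[Theorem~3.2]{RMO}---and your argument is exactly the standard one that proof follows: kill the $\mathcal{I}\to\mathcal{S}$ component using $[\mathcal{L},\mathcal{I}]=0$ together with $Z(\mathcal{S})=0$, absorb the $\mathcal{S}\to\mathcal{S}$ component into $R_\mathcal{S}$ via the fact that all derivations of a semisimple Lie algebra are inner, and then split the remainder, with the only delicate point being the $\mathcal{I}\times\mathcal{S}$ case in the verification that $d_2$ is a derivation, which you handle correctly. Your uniqueness bootstrap for the directness of the sum is also sound.
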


\begin{proof}
The proof is similar to that in the case of simple Leibniz
algebras (see \cite[Theorem~3.2]{RMO}).
\end{proof}

\begin{lm} \label{lmZ2}
\begin{itemize}
\item[(a).] For any $d\in {\rm{Der}}(\mathcal{L})_{\mathcal{S}, \mathcal{I}}$,
the restriction $d_{|_\mathcal{S}}:\mathcal{S}\to \mathcal{I}$ is an
$\mathcal{S}$-module homomorphism.
\item[(b).] For any  $d\in {\rm{Der}}(\mathcal{L})_{\mathcal{I}, \mathcal{I}}$,
the restriction $d_{|_\mathcal{I}}:\mathcal{I}\to \mathcal{I}$ is an
$\mathcal{S}$-module homomorphism.
\end{itemize}
\end{lm}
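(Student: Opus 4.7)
The statement is essentially a direct consequence of the Leibniz derivation identity
\[
d[x,y]=[d(x),y]+[x,d(y)]
\]
together with the identity $[\mathcal{L},\mathcal{I}]=0$ already recorded in Section~2 (since $\mathcal{I}$ is spanned by squares). I would treat the two parts separately but by the same two-line calculation.

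\textbf{Part (a).} Fix $d\in{\rm{Der}}(\mathcal{L})_{\mathcal{S},\mathcal{I}}$ and take $s_1,s_2\in\mathcal{S}$. Apply the Leibniz derivation identity to $[s_1,s_2]$:
\[
d([s_1,s_2])=[d(s_1),s_2]+[s_1,d(s_2)].
\]
By definition $d(s_2)\in\mathcal{I}$, so $[s_1,d(s_2)]\in[\mathcal{L},\mathcal{I}]=0$, giving $d([s_1,s_2])=[d(s_1),s_2]$. Now $\mathcal{S}$ is an $\mathcal{S}$-module via the adjoint action $s_1.s_2=[s_1,s_2]$ and $\mathcal{I}$ is an $\mathcal{S}$-module via $i.s=[i,s]$; the identity just derived reads $d(s_1.s_2)=d(s_1).s_2$, which is exactly the $\mathcal{S}$-module homomorphism condition for the restriction $d|_{\mathcal{S}}\colon\mathcal{S}\to\mathcal{I}$.

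\textbf{Part (b).} Fix $d\in{\rm{Der}}(\mathcal{L})_{\mathcal{I},\mathcal{I}}$ and take $i\in\mathcal{I}$, $s\in\mathcal{S}$. Again applying the Leibniz derivation identity,
\[
d([i,s])=[d(i),s]+[i,d(s)]=[d(i),s],
\]
since $d(s)=0$ by definition of ${\rm{Der}}(\mathcal{L})_{\mathcal{I},\mathcal{I}}$. As $d(\mathcal{I})\subset\mathcal{I}$, this says $d(i.s)=d(i).s$, so $d|_{\mathcal{I}}\colon\mathcal{I}\to\mathcal{I}$ is an $\mathcal{S}$-module homomorphism.

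\textbf{Obstacles.} There is essentially no obstacle: the only substantive input is the vanishing $[\mathcal{L},\mathcal{I}]=0$, which is a structural fact about Leibniz algebras already stated in the preliminaries and used to kill the unwanted term in (a). The rest is bookkeeping about which component of the decomposition $\mathcal{L}=\mathcal{S}\dot{+}\mathcal{I}$ the image of $d$ lands in. No induction, no appeal to the classification from Section~3, and no use of semisimplicity beyond the setup is needed at this stage — semisimplicity will only enter when Lemma~\ref{schur} is applied to these module homomorphisms in the subsequent arguments.
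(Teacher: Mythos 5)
Your proof is correct and is essentially identical to the paper's: the authors apply the derivation identity to $[x,y]$ and drop the term $[x,d(y)]$ because it lies in $[\mathcal{L},\mathcal{I}]=0$ (for part (a)) or because $d(\mathcal{S})=0$ (for part (b)), exactly as you do. No difference in approach worth noting.
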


\begin{proof}
(a) Let $d\in {\rm{Der}}(\mathcal{L})_{\mathcal{S}, \mathcal{I}}.$
For each \(x, y\in \mathcal{S}\) we have
\begin{eqnarray*}
d([x, y])=[d(x), y]+[x, d(y)]=[d(x), y].
\end{eqnarray*}
This means that \(d_{|_\mathcal{S}}\) is a \(\mathcal{S}\)-module
homomorphism.

The proof of the part (b) is similar to (a). The proof is
complete.
\end{proof}

The following result directly follows from Lemma~\ref{schur}.

\begin{lm} \label{lmZ2'}
Let $\mathcal{S}$ be a semisimple Lie algebra and $\mathcal{V}$ is a
simple $\mathcal{S}$-module. Let $m,n\in \mathbb{\mathbb{Z}}_+$.
Then
\begin{itemize}
\item[(a).] ${\rm{End}}_\mathcal{S}(m\mathcal{V}, n\mathcal{V})\simeq \hom_{\mathbb{C}}(\mathbb{C}^m, \mathbb{C}^n)$;
\item[(b).]     ${\rm{Aut}}_\mathcal{S}(m\mathcal{V})\simeq {\rm{GL}}_m(\mathbb{C})$.\end{itemize}
\end{lm}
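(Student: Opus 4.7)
The plan is to apply Schur's Lemma (Lemma~\ref{schur}) directly to the completely reducible decompositions $m\mathcal{V}=\mathcal{V}_1\oplus\cdots\oplus\mathcal{V}_m$ and $n\mathcal{V}=\mathcal{W}_1\oplus\cdots\oplus\mathcal{W}_n$, where each summand is a copy of the irreducible $\mathcal{S}$-module $\mathcal{V}$.

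For part (a), I would first fix a compatible family of $\mathcal{S}$-module isomorphisms $\varphi_{ij}\colon \mathcal{V}_i\to \mathcal{W}_j$ for every pair $(i,j)$: concretely, pick isomorphisms $\eta_i\colon \mathcal{V}_i\to \mathcal{V}$ and $\theta_j\colon \mathcal{V}\to \mathcal{W}_j$ and set $\varphi_{ij}=\theta_j\circ \eta_i$. Lemma~\ref{schur} then asserts that every $\mathcal{S}$-module homomorphism $\varphi\colon m\mathcal{V}\to n\mathcal{V}$ admits an expansion $\varphi=\sum_{i=1}^m\sum_{j=1}^n \lambda_{ij}\varphi_{ij}$ with $\lambda_{ij}\in\mathbb{C}$; uniqueness of the scalars $\lambda_{ij}$ follows by restricting $\varphi$ to $\mathcal{V}_i$ and post-composing with the projection to $\mathcal{W}_j$, then invoking Schur's Lemma again on a homomorphism between simple modules. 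The assignment $\varphi\mapsto (\lambda_{ij})$ is manifestly $\mathbb{C}$-linear and bijective, producing the vector space isomorphism ${\rm{End}}_\mathcal{S}(m\mathcal{V}, n\mathcal{V})\cong M_{n\times m}(\mathbb{C})\cong \hom_{\mathbb{C}}(\mathbb{C}^m,\mathbb{C}^n)$.

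For part (b), I would specialize to $n=m$ with the same family $\{\varphi_{ij}\}$ and check that the bijection of (a) is multiplicative. Because the $\varphi_{ij}$ factor through a single fixed copy of $\mathcal{V}$ via $\eta_i$ and $\theta_j$, one has $\varphi_{jk}\circ \varphi_{ij}=\varphi_{ik}$ while $\varphi_{j'k}\circ \varphi_{ij}=0$ whenever $j'\ne j$. Consequently composition of $\mathcal{S}$-endomorphisms of $m\mathcal{V}$ corresponds under the bijection of (a) to ordinary matrix multiplication in $M_m(\mathbb{C})$. An endomorphism is therefore invertible iff the associated matrix lies in $\mathrm{GL}_m(\mathbb{C})$, yielding the group isomorphism ${\rm{Aut}}_\mathcal{S}(m\mathcal{V})\simeq \mathrm{GL}_m(\mathbb{C})$.

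The only subtle point is the compatibility of the family $\{\varphi_{ij}\}$, which is essential so that composition in (b) truly translates to matrix multiplication; this is easily arranged by routing all of them through a single chosen copy of $\mathcal{V}$. Apart from this minor bookkeeping the lemma is indeed immediate from Schur's Lemma, consistent with the author's remark that it follows directly.
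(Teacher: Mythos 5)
Your argument is correct and is exactly the route the paper intends: the paper offers no written proof beyond the remark that the lemma ``directly follows from Lemma~\ref{schur}'', and your write-up simply supplies the details of that deduction (fixing a compatible family $\varphi_{ij}$, extracting the coefficient matrix, and checking multiplicativity for part (b)). The one point worth making explicit is that for $n=m$ you should take $\theta_j=\eta_j^{-1}$ so that $\eta_j\circ\theta_j=\mathrm{id}_{\mathcal{V}}$ and the identity $\varphi_{jk}\circ\varphi_{ij}=\varphi_{ik}$ holds on the nose rather than up to a nonzero scalar.
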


The above four lemmata imply the following result.

\begin{thm} \label{lmZ3}
Let $\mathcal{L}=\mathcal{S}\dot + \mathcal{I}$ be a   semisimple
Leibniz algebra with decompositions given as in~\eqref{decss} and
\eqref{decii}. Then
\begin{itemize}
\item[(a).]  Each $d\in {\rm{Der}}(\mathcal{L})_{\mathcal{S}, \mathcal{I}}$
is uniquely determined by a map in $\oplus_{i=1}^r\hom_{\mathbb{C}}(\mathbb{C}^{n_i}, \mathbb{C}^{m_i})$;
\item[(b).] each $d\in {\rm{Der}}(\mathcal{L})_{\mathcal{I}, \mathcal{I}}$ is uniquely determined by a map in $\oplus_{i=1}^{r+s}\hom_{\mathbb{C}}(\mathbb{C}^{m_i}, \mathbb{C}^{m_i})$;
\item[(c).] \[
\dim {\rm{Der}}(\mathcal{L})=\dim \mathcal{S}+\sum_{i=1}^rn_im_i+\sum\limits_{i=1}^{r+s}
m_i^2.
\]

\end{itemize}
\end{thm}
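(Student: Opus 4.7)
The plan is to use Lemma \ref{lmZ1} to reduce computing $\dim{\rm Der}(\mathcal{L})$ to computing the dimensions of the three summands $R_{\mathcal{S}}$, ${\rm Der}(\mathcal{L})_{\mathcal{S},\mathcal{I}}$ and ${\rm Der}(\mathcal{L})_{\mathcal{I},\mathcal{I}}$ separately; parts (a) and (b) will describe the last two concretely and (c) will then be a simple dimension count.

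For (a) I would analyze the restriction map $\Phi_1:{\rm Der}(\mathcal{L})_{\mathcal{S},\mathcal{I}}\to\hom_{\mathcal{S}}(\mathcal{S},\mathcal{I})$, $d\mapsto d|_{\mathcal{S}}$, which lands in the stated target by Lemma \ref{lmZ2}(a). Injectivity is automatic since $d|_{\mathcal{I}}=0$. For surjectivity I would take any $\varphi\in\hom_{\mathcal{S}}(\mathcal{S},\mathcal{I})$, extend it to $\mathcal{L}$ by $\varphi|_{\mathcal{I}}=0$, and verify the Leibniz identity on the four possible product types in $\{\mathcal{S},\mathcal{I}\}\times\{\mathcal{S},\mathcal{I}\}$: the $(\mathcal{S},\mathcal{S})$ case needs exactly the $\mathcal{S}$-equivariance of $\varphi$, while the other three collapse to trivial identities via $[\mathcal{L},\mathcal{I}]=0$ together with $\varphi|_{\mathcal{I}}=0$. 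Once $\Phi_1$ is identified as a linear isomorphism, I would invoke Lemma \ref{lmZ2'}(a) applied to the decompositions \eqref{decss} and \eqref{decii}: since $\mathcal{S}_i\simeq\mathcal{I}_i$ for $i\le r$ and all remaining irreducibles appearing on either side are pairwise non-isomorphic, Schur's lemma collapses the hom space to $\bigoplus_{i=1}^{r}\hom_{\mathbb{C}}(\mathbb{C}^{n_i},\mathbb{C}^{m_i})$, of dimension $\sum_{i=1}^{r}n_im_i$. Part (b) proceeds in exact parallel: restriction $d\mapsto d|_{\mathcal{I}}$ identifies ${\rm Der}(\mathcal{L})_{\mathcal{I},\mathcal{I}}$ with ${\rm End}_{\mathcal{S}}(\mathcal{I})$ via Lemma \ref{lmZ2}(b), and Lemma \ref{lmZ2'}(a) applied to \eqref{decii} supplies the identification with $\bigoplus_{i=1}^{r+s}\hom_{\mathbb{C}}(\mathbb{C}^{m_i},\mathbb{C}^{m_i})$, of dimension $\sum_{i=1}^{r+s}m_i^2$.

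For (c) the remaining piece is $\dim R_{\mathcal{S}}=\dim\mathcal{S}$, which I would obtain by noting that the kernel of $x\mapsto R_x$ on $\mathcal{S}$ is contained in $Z(\mathcal{S})$ (test $[y,x]=0$ against $y\in\mathcal{S}$), and $Z(\mathcal{S})=0$ by semisimplicity; adding the three contributions then yields the stated formula. The main technical obstacle I foresee is precisely the extension-by-zero step, i.e.\ confirming that an abstract $\mathcal{S}$-module homomorphism genuinely defines a Leibniz derivation rather than merely a module map. This is where the asymmetric property $[\mathcal{L},\mathcal{I}]=0$ of right Leibniz algebras does the heavy lifting, trivializing three of the four cases of the Leibniz rule and leaving only the $\mathcal{S}$-equivariance condition to match on $\mathcal{S}\times\mathcal{S}$.
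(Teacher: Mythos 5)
Your proposal is correct and follows essentially the same route as the paper, which offers no written proof beyond the remark that the theorem follows from the four preceding lemmata (the decomposition of ${\rm{Der}}(\mathcal{L})$ into $R_\mathcal{S}\dot+{\rm{Der}}(\mathcal{L})_{\mathcal{S},\mathcal{I}}\dot+{\rm{Der}}(\mathcal{L})_{\mathcal{I},\mathcal{I}}$, the equivariance of the restrictions, and the Schur-type computation of the hom spaces). You supply exactly the details the paper leaves implicit, most usefully the surjectivity of the restriction maps via extension by zero, where $[\mathcal{L},\mathcal{I}]=0$ trivializes all but the $\mathcal{S}\times\mathcal{S}$ case, and the injectivity of $x\mapsto R_x$ on $\mathcal{S}$ from $Z(\mathcal{S})=0$.
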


%\begin{proof} This follows from Lie algebra theory. You may write more!
%\end{proof}

\section{Automorphisms of semisimple Leibniz algebras}
\label{fifth}

In this section we describe the group Aut$(\mathcal{L})$ of
automorphisms of semisimple Leibniz algebras $\mathcal{L}.$ We
first have the following result.

\begin{lm} \label{lmZ50}
Let $\mathcal{L}=\oplus_{i=1}^nm_i\mathcal{L}_i$ be a semisimple
Leibniz algebra where  $\mathcal{L}_i$'s are pairwise
nonisomorphic indecomposable semisimple Leibniz algebras. Then
$$
{\rm{Aut}}(\mathcal{L})\cong \prod_{i=1}^n ({\mathbb{S}}_{m_i}\ltimes
{\rm{Aut}} (\mathcal{L}_i)^{m_i})
$$ where
${\mathbb{S}}_{m}$ is the symmetric group of permutations. 
\end{lm}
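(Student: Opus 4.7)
The plan proceeds in two steps. First I would show that each isotypic block $m_i\mathcal{L}_i$ is characteristic in $\mathcal{L}$; this immediately yields
$$\mathrm{Aut}(\mathcal{L})\cong\prod_{i=1}^n\mathrm{Aut}(m_i\mathcal{L}_i),$$
because an automorphism of $\mathcal{L}$ is then determined by its restrictions to the separate blocks, and conversely any collection of block automorphisms glues to an automorphism of $\mathcal{L}$. Second, I would compute $\mathrm{Aut}(m\mathcal{M})$ for a single indecomposable $\mathcal{M}=\mathcal{L}_i$, obtaining the wreath-type structure $\mathbb{S}_m\ltimes\mathrm{Aut}(\mathcal{M})^m$.

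For the first step, let $\mathcal{J}_i\subseteq\mathcal{L}$ denote the sum of all Leibniz ideals of $\mathcal{L}$ isomorphic (as abstract Leibniz algebras) to $\mathcal{L}_i$; this subspace is tautologically preserved by every automorphism, so it suffices to verify $\mathcal{J}_i=m_i\mathcal{L}_i$. The inclusion $m_i\mathcal{L}_i\subseteq\mathcal{J}_i$ is clear since each of the $m_i$ copies of $\mathcal{L}_i$ is such an ideal. For the reverse inclusion, I would take an ideal $\mathcal{N}\cong\mathcal{L}_i$ in $\mathcal{L}$, compose its inclusion with the projection to $m_j\mathcal{L}_j$ for $j\ne i$, and argue that the resulting Leibniz homomorphism $\mathcal{L}_i\to m_j\mathcal{L}_j$ must vanish. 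Passing to the liezations, the induced map of semisimple Lie algebras must have semisimple image; combining the structure result Theorem~\ref{thm25} with Theorem~\ref{Cor 21.2} and Lemma~\ref{schur} shows that no nonzero such map exists, because matching the simple Lie summands of the Levi part \emph{and} the $\mathcal{S}$-module types in $\mathcal{I}$ across the two sides would force $\mathcal{L}_i\cong\mathcal{L}_j$, contradicting the hypothesis. Hence $\mathcal{N}\subseteq m_i\mathcal{L}_i$.

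For the second step, write $\mathcal{M}^{(1)},\dots,\mathcal{M}^{(m)}$ for the $m$ copies of $\mathcal{M}$. The same projection argument, applied within the single isotypic block, shows that the only ideal direct summands of $m\mathcal{M}$ isomorphic to $\mathcal{M}$ are exactly the $\mathcal{M}^{(k)}$: any ideal $\cong\mathcal{M}$ projects to zero on all but one factor and hence equals that factor. Consequently every $\varphi\in\mathrm{Aut}(m\mathcal{M})$ induces a permutation $\sigma\in\mathbb{S}_m$ with $\varphi(\mathcal{M}^{(k)})=\mathcal{M}^{(\sigma(k))}$, and the assignment $\varphi\mapsto\sigma$ is a group homomorphism fitting into a short exact sequence
$$1\to\mathrm{Aut}(\mathcal{M})^m\to\mathrm{Aut}(m\mathcal{M})\to\mathbb{S}_m\to 1,$$
which is split by the obvious embedding of $\mathbb{S}_m$ as permutation automorphisms of the copies. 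This delivers the desired semidirect product.

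The main obstacle is the vanishing of the projection-induced homomorphisms $\mathcal{L}_i\to m_j\mathcal{L}_j$ invoked above, i.e., ruling out ``diagonal'' or ``skew'' copies of $\mathcal{L}_i$ inside other isotypic blocks (or spread across several blocks). The subtlety is that Leibniz homomorphisms between semisimple algebras are more flexible than isomorphisms, so nonisomorphism alone does not kill all maps. The required rigidity is supplied by Theorem~\ref{thm25}, which fully determines each $\mathcal{L}_i$ by the connectivity pattern of its simple Lie ideals and the tensor decompositions of its irreducible $\mathcal{S}$-submodules, together with Lemma~\ref{schur}, which forces any module-level intertwiner to respect those weight data and so to vanish in the cross-type situation.
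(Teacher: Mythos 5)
Your overall architecture (show the isotypic blocks are characteristic, then do a wreath-product analysis within each block) is the same as the paper's, but the justification you give for the crucial step is not correct. You claim that every Leibniz homomorphism $\mathcal{L}_i\to m_j\mathcal{L}_j$ ($i\ne j$) must vanish because non-isomorphy of $\mathcal{L}_i$ and $\mathcal{L}_j$ obstructs any matching of Levi summands and $\mathcal{S}$-module types. This is false as stated: composing the projection of $\mathcal{L}_i=\mathcal{S}^{(i)}\dot{+}\mathcal{I}^{(i)}$ onto its Levi part with an embedding of a common simple summand of $\mathcal{S}^{(i)}$ and $\mathcal{S}^{(j)}$ into $\mathcal{L}_j$ already gives a nonzero Leibniz homomorphism between nonisomorphic indecomposable semisimple Leibniz algebras, and neither Theorem~\ref{thm25} nor Lemma~\ref{schur} rules such maps out. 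You yourself flag this vanishing as ``the main obstacle'' and never close it. Moreover, the same projection argument cannot possibly work inside a single isotypic block $m\mathcal{M}$, where the factors \emph{are} isomorphic, yet you invoke it there as well to exclude skew copies.

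What actually makes both steps work is that you are dealing with \emph{ideals}, not arbitrary homomorphic images, together with perfectness. Each indecomposable summand satisfies $[\mathcal{L}_i,\mathcal{L}_i]=\mathcal{L}_i$ (because $[\mathcal{S},\mathcal{S}]=\mathcal{S}$ and $[\mathcal{I},\mathcal{S}]=\mathcal{I}$), and for any perfect two-sided ideal $\mathcal{N}$ of a direct sum of ideals $A\oplus B$ one has
$$
\mathcal{N}=[\mathcal{N},\mathcal{N}]\subseteq[\mathcal{N},A]+[\mathcal{N},B]\subseteq(\mathcal{N}\cap A)+(\mathcal{N}\cap B),
$$
so $\mathcal{N}$ splits along the summands. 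Applied to $\mathcal{N}=\varphi(\mathcal{L}_i^{(k)})$, a perfect indecomposable ideal, this forces $\varphi(\mathcal{L}_i^{(k)})\subseteq\mathcal{L}_j^{(l)}$ for a single pair $(j,l)$; the same argument for $\varphi^{-1}$ upgrades this to equality, and then $j=i$ by non-isomorphy. With that substitution your proof becomes correct and essentially coincides with the paper's, which reduces immediately to a single block and reads off the permutation from indecomposability (the paper, for its part, also leaves the splitting of $\varphi(\mathcal{L}_i)$ along the given factors implicit).
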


\begin{proof} It suffices to consider an algebra of the form
\(m \mathcal{L}=\mathcal{L}_1\oplus\ldots \oplus\mathcal{L}_m,\)
where \(\mathcal{L}_i=\mathcal{L}\) for all \(i.\) For \(1\leq i,
j \leq m\) denote by \(\pi_{i j}\) an identical isomorphism from
 \(\mathcal{L}_i\) onto
\(\mathcal{L}_j.\) Then
\[
\left\{\pi=\oplus_{i=1}^m \pi_{i \sigma(i)}: \sigma\in
{\mathbb{S}}_{m}\right\}
\]
is a subgroup of \(\textrm{Aut}(m\mathcal{L}),\) which isomorphic
to the group \({\mathbb{S}}_{m}\) and we may identify its to
\({\mathbb{S}}_{m}.\)

Let \(\varphi\in \textrm{Aut}(m\mathcal{L}).\) Since
\(\mathcal{L}_i\) is an indecomposable Leibniz algebra,
\(\varphi(\mathcal{L}_i)\) is also an indecomposable Leibniz
algebra. Thus there exists an index \(\sigma_\varphi(i)\) such
that \(\varphi(\mathcal{L}_i)=\mathcal{L}_{\sigma_\varphi(i)}.\)
It is clear that \(\sigma_\varphi:i\to \sigma_\varphi(i)\) is a
permutation of the set \(\{1, \ldots, m\}\) and
\(\phi_i=\pi_{\sigma_\varphi(i) i}\circ
\varphi_{|_{\mathcal{L}_i}}\) is an automorphism of
\(\mathcal{L}_i.\)  Then \(\varphi=\pi_\varphi \circ\phi,\) where
\(\pi_\varphi=\oplus_{i=1}^m \pi_{i \sigma_\varphi(i)}\) and
\(\phi=\prod_{i=1}^m \phi_i\in {\rm{Aut}} (\mathcal{L})^{m}.\)
Thus $\textrm{Aut}(m\mathcal{L})= \mathbb{S}_{m} {\rm{Aut}}
(\mathcal{L})^{m}$.  It is easy to see that $ {\rm{Aut}}
(\mathcal{L})^{m}$ is a normal subgroup of $\textrm{Aut}(m\mathcal{L})$. So \(\varphi\in \mathbb{S}_{m}\ltimes {\rm{Aut}}
(\mathcal{L})^{m}.\) The proof is complete.
\end{proof}

Now we need to determine $\textrm{Aut}(\mathcal{L})$ for
indecomposable semisimple Leibniz algebras $\mathcal{L}$. From now
on in this section, we assume that $\mathcal{L}=\mathcal{S}\dot{+}
\mathcal{I}$ is an indecomposable semisimple Leibniz algebra with
decompositions as in~\eqref{decss} and \eqref{decii}.  We define
the projections
$$
\aligned \rho_1: \ &\mathcal{L}\to \mathcal{S}, x+y\mapsto x, \forall x\in \mathcal{S}, y\in \mathcal{I};\\
\rho_2: \ &\mathcal{L}\to \mathcal{I}, x+y\mapsto y, \forall x\in
\mathcal{S}, y\in \mathcal{I}.\endaligned
 $$
Let $\varphi$  be an  automorphism of $\mathcal{L}$,
$\varphi_\mathcal{I}=\rho_2\circ \varphi|_\mathcal{I}$,
$\varphi_1=\rho_1\circ \varphi|_\mathcal{S}$ and
$\varphi_2=\rho_2\circ \varphi|_\mathcal{S}$. Clearly, $\varphi$
is determined by $\varphi_\mathcal{I}$, $\varphi_1$ and
$\varphi_2$. It is clear that \(\varphi(\mathcal{I})=
\mathcal{I}.\)

For any $\mathcal{S}$-module $\mathcal{V}$ and any
$\sigma\in$Aut$(\mathcal{S})$, we define the new
$\mathcal{S}$-module $\mathcal{V}^{\sigma}=\mathcal{V}$ with the
action
$$
v\cdot x=[v, x]'=v\sigma(x),\,\,\forall\ v\in \mathcal{V}, x\in
\mathcal{S}.
$$
We know from \cite{Hum}  that $\mathcal{V}\simeq
\mathcal{V}^{\sigma}$ if $\sigma$ is an inner automorphism of
$\mathcal{S}.$ If $\sigma$ is not an inner automorphism of
$\mathcal{S}$, we generally do not have  $\mathcal{V}\simeq
\mathcal{V}^{\sigma}$.

\begin{lm} \label{lmZ51}
 \begin{itemize}\item[(a).] $\varphi_1\in {\rm{Aut}}(\mathcal{\mathcal{S}})$.
\item[(b).]       $\varphi_\mathcal{I}$ is an $S$-module isomorphism from $\mathcal{I}$ onto $\mathcal{I}^{\varphi_1}$.
\item[(c).] $\varphi_2$ is an $\mathcal{S}$-module hommorphism from $\mathcal{S}$ to $\mathcal{I}^{\varphi_1}$.
\end{itemize}
\end{lm}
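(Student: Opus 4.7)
The plan is to exploit two structural facts: (i) $\varphi(\mathcal{I}) = \mathcal{I}$, as already noted just before the lemma; and (ii) the identity $[\mathcal{L},\mathcal{I}] = 0$ recorded in Section~2, which specialises to $[\mathcal{S},\mathcal{I}] = 0$ and $[\mathcal{I},\mathcal{I}] = 0$. The strategy in each part is to expand the relation $\varphi([a,b]) = [\varphi(a),\varphi(b)]$ in terms of the components $\varphi_1, \varphi_2, \varphi_\mathcal{I}$, to discard every bracket that lands in $[\mathcal{L},\mathcal{I}]$, and then to read off the surviving $\mathcal{S}$- and $\mathcal{I}$-parts.

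For part (a), I would take $s,t\in\mathcal{S}$, write $\varphi(s) = \varphi_1(s)+\varphi_2(s)$ and likewise for $t$, and expand
$$
\varphi([s,t]) = [\varphi_1(s),\varphi_1(t)] + [\varphi_1(s),\varphi_2(t)] + [\varphi_2(s),\varphi_1(t)] + [\varphi_2(s),\varphi_2(t)].
$$
The second and fourth terms sit in $[\mathcal{L},\mathcal{I}]=0$. Equating $\mathcal{S}$-components gives $\varphi_1([s,t]) = [\varphi_1(s),\varphi_1(t)]$, so $\varphi_1$ is a Lie endomorphism of $\mathcal{S}$. Injectivity follows from $\varphi_1(s)=0 \Rightarrow \varphi(s)\in\mathcal{I}$ combined with $\varphi^{-1}(\mathcal{I})=\mathcal{I}$ and $\mathcal{S}\cap\mathcal{I}=0$; surjectivity follows either by dimension count or by applying the same argument to $\varphi^{-1}$.

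For part (b), since $\varphi(\mathcal{I})=\mathcal{I}$ the map $\varphi_\mathcal{I}$ coincides with $\varphi|_\mathcal{I}$ and is automatically a linear bijection. For $v\in\mathcal{I}$, $s\in\mathcal{S}$,
$$
\varphi_\mathcal{I}([v,s]) = [\varphi_\mathcal{I}(v),\varphi_1(s)+\varphi_2(s)] = [\varphi_\mathcal{I}(v),\varphi_1(s)],
$$
since the remaining term lies in $[\mathcal{I},\mathcal{I}]=0$. By the very definition of the twisted action this is $\varphi_\mathcal{I}(v)\cdot s$ in $\mathcal{I}^{\varphi_1}$, so $\varphi_\mathcal{I}\colon\mathcal{I}\to\mathcal{I}^{\varphi_1}$ is an $\mathcal{S}$-module isomorphism.

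Part (c) is extracted from the same expansion used in (a): projecting $\varphi([s,t])$ onto $\mathcal{I}$ via $\rho_2$ leaves only the one surviving $\mathcal{I}$-valued contribution $[\varphi_2(s),\varphi_1(t)]$, so $\varphi_2([s,t]) = [\varphi_2(s),\varphi_1(t)] = \varphi_2(s)\cdot t$ in $\mathcal{I}^{\varphi_1}$. Since the $\mathcal{S}$-module structure on $\mathcal{S}$ is its right adjoint action $t\cdot s = [t,s]$, this is precisely the intertwining property. The only real obstacle is careful bookkeeping: one must keep track of which summand each bracket lies in and notice that the twist by $\varphi_1$ appears in (b) and (c) for exactly the same reason, namely that when $\varphi$ pushes an element of $\mathcal{S}$ across the bracket it becomes $\varphi_1(s) + \varphi_2(s)$, and only the $\varphi_1$-part survives after using $[\mathcal{L},\mathcal{I}]=0$.
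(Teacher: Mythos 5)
Your proof is correct and follows essentially the same route as the paper: expand $\varphi([x,y])$ and $\varphi([i,x])$, discard the brackets lying in $[\mathcal{L},\mathcal{I}]=0$, and read off the $\mathcal{S}$- and $\mathcal{I}$-components. You even supply the bijectivity argument for $\varphi_1$ (via $\varphi(\mathcal{I})=\mathcal{I}$ and $\mathcal{S}\cap\mathcal{I}=0$), which the paper's proof leaves implicit.
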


\begin{proof}
Let $x,y\in \mathcal{S}$, then
\begin{eqnarray*}
\varphi_1([x,y])+\varphi_2([x,y])&=&
\varphi([x,y])=[\varphi(x),\varphi(y)]=\\
&=& [\varphi_1(x)+\varphi_2(x),
\varphi_1(y)+\varphi_2(y)]=\\
&=&[\varphi_1(x),\varphi_1(y)]+[\varphi_2(x), \varphi_1(y)].
\end{eqnarray*}
This implies
\begin{eqnarray*}
\varphi_1([x,y])=[\varphi_1(x),\varphi_1(y)],
\end{eqnarray*}
\begin{eqnarray*}
\varphi_2([x,y])=[\varphi_2(x), \varphi_1(y)].
\end{eqnarray*}

Let $x\in \mathcal{S}, i \in \mathcal{I}$, then
\begin{eqnarray*}
\varphi_I([i, x]) &=& \varphi([i, x])=[\varphi(i),\varphi(x)]=
[\varphi_I(i), \varphi_1(x)].
\end{eqnarray*}
The proof is complete.
\end{proof}

\begin{lm}\label{lmZ52}
\begin{itemize}
\item[(a).] For $\sigma\in
{\rm{Aut}}(\mathcal{S})$ and an $\mathcal{S}$-module isomorphism
$\sigma_\mathcal{I}: \mathcal{I}\to \mathcal{I}^{\sigma}$ there is
$\varphi\in {\rm{Aut}}(\mathcal{L})$ such that $\varphi_1=\sigma$
and $\varphi_\mathcal{I}=\sigma_\mathcal{I}.$
\item[(b).] For $\sigma\in
{\rm{Aut}}(\mathcal{S})$ there is $\varphi\in
{\rm{Aut}}(\mathcal{L})$ such that $\varphi_1=\sigma$ if and only
if $\mathcal{I}\simeq \mathcal{I}^{\sigma}$ as
$\mathcal{S}$-modules.
\end{itemize}
\end{lm}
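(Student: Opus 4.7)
The plan for (a) is to write down the natural candidate automorphism directly and then verify the bracket identity case by case. Specifically, I will define the linear map
$$\varphi:\mathcal{L}=\mathcal{S}\oplus\mathcal{I}\to \mathcal{L},\quad \varphi(x+y)=\sigma(x)+\sigma_\mathcal{I}(y),\qquad x\in\mathcal{S},\ y\in\mathcal{I}.$$
Since $\sigma$ is bijective on $\mathcal{S}$ and $\sigma_\mathcal{I}$ is bijective on $\mathcal{I}$, $\varphi$ is automatically a linear bijection preserving the decomposition, and by construction $\varphi_1=\sigma$, $\varphi_2=0$ and $\varphi_\mathcal{I}=\sigma_\mathcal{I}$. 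So the only thing left to check is that $\varphi$ is an algebra homomorphism.

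By bilinearity it suffices to verify $\varphi([a,b])=[\varphi(a),\varphi(b)]$ on pairs $(a,b)$ drawn from $\mathcal{S}$ or $\mathcal{I}$. The case $(\mathcal{S},\mathcal{S})$ is immediate from $\sigma\in{\rm Aut}(\mathcal{S})$, since $\varphi$ restricts to $\sigma$ on $\mathcal{S}$. The cases $(\mathcal{S},\mathcal{I})$ and $(\mathcal{I},\mathcal{I})$ are trivial because $[\mathcal{L},\mathcal{I}]=0$ in any semisimple Leibniz algebra (as recalled in Section~2) and $\varphi(\mathcal{I})\subseteq\mathcal{I}$, so both sides vanish. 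The substantive case is $(\mathcal{I},\mathcal{S})$: for $y\in\mathcal{I}$ and $x\in\mathcal{S}$,
$$\varphi([y,x])=\sigma_\mathcal{I}(y.x)=\sigma_\mathcal{I}(y)\cdot x=\sigma_\mathcal{I}(y)\sigma(x)=[\varphi(y),\varphi(x)],$$
where the second equality uses that $\sigma_\mathcal{I}:\mathcal{I}\to\mathcal{I}^\sigma$ is an $\mathcal{S}$-module homomorphism, and the third is the definition of the twisted action $v\cdot x=v\sigma(x)$ on $\mathcal{I}^\sigma$. This yields $\varphi\in{\rm Aut}(\mathcal{L})$ with the desired properties.

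For (b), the $(\Leftarrow)$ direction falls straight out of (a): an isomorphism $\mathcal{I}\simeq\mathcal{I}^\sigma$ supplies the datum $\sigma_\mathcal{I}$ needed to invoke part (a), producing a $\varphi\in{\rm Aut}(\mathcal{L})$ with $\varphi_1=\sigma$. Conversely, given $\varphi\in{\rm Aut}(\mathcal{L})$ with $\varphi_1=\sigma$, Lemma~\ref{lmZ51}(b) immediately provides $\varphi_\mathcal{I}:\mathcal{I}\to\mathcal{I}^\sigma$ as the required $\mathcal{S}$-module isomorphism.

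The only genuine obstacle is the bookkeeping in the $(\mathcal{I},\mathcal{S})$ case of (a): one must be careful to translate the defining twist $v\cdot x=v\sigma(x)$ on $\mathcal{I}^\sigma$ back into the honest bracket $[\sigma_\mathcal{I}(y),\sigma(x)]$ inside $\mathcal{L}$, since $\sigma_\mathcal{I}$ is a hom with respect to the twisted action but the target bracket is computed in $\mathcal{L}$ itself. Once that translation is made explicit, the entire statement follows formally from the construction together with Lemma~\ref{lmZ51}.
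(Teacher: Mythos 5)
Your proposal is correct and follows essentially the same route as the paper: define $\varphi(x+i)=\sigma(x)+\sigma_{\mathcal{I}}(i)$, verify the bracket identity using $[\mathcal{L},\mathcal{I}]=0$ and the fact that $\sigma_{\mathcal{I}}$ intertwines the action of $\mathcal{S}$ on $\mathcal{I}$ with the $\sigma$-twisted action (the key identity $\sigma_{\mathcal{I}}([i,y])=[\sigma_{\mathcal{I}}(i),\sigma(y)]$), and then deduce (b) from (a) together with Lemma~\ref{lmZ51}(b). Your explicit case-by-case bookkeeping just unpacks the single displayed chain of equalities in the paper's argument.
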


\begin{proof}
(a) For $\sigma\in {\rm{Aut}}(\mathcal{S})$ and an $S$-module
isomorphism $\sigma_\mathcal{I}: \mathcal{I}\to
\mathcal{I}^{\sigma}$ set
\[
\varphi(x)=\sigma(x)+\sigma_\mathcal{I}(i),\ x+i\in
\mathcal{S}+\mathcal{I}.
\]
Then
\begin{eqnarray*}
\varphi([x+i, y+j]) & = & \varphi([x, y]+[i, y])=\sigma([x,
y])+\sigma_\mathcal{I}([i, y])=\\
&=& [\sigma(x), \sigma(y)]+[\sigma_\mathcal{I}(i), \sigma(y)]= \\
&=& [\sigma(x)+\sigma_\mathcal{I}(i),
\sigma(y)+\sigma_\mathcal{I}(j)]=[\varphi(x+i), \varphi(y+j)].
\end{eqnarray*}
Thus $\varphi$ is an automorphism such that $\varphi_1=\sigma$ and
$\varphi_\mathcal{I}=\sigma_\mathcal{I}.$

(b) Let $\sigma\in {\rm{Aut}}(\mathcal{S}).$ Suppose that there
exists $\varphi\in {\rm{Aut}}(\mathcal{L})$ such that
$\varphi_1=\sigma.$ For every \(x\in \mathcal{S}, i\in
\mathcal{I}\) we have
\begin{eqnarray*}
\varphi_I([i, x])=\varphi([i, x])=[\varphi(i),
\varphi(x)]=[\varphi_I(i), \sigma(x)].
\end{eqnarray*}
This means that $\mathcal{I}\simeq \mathcal{I}^{\sigma}$ as
$\mathcal{S}$-modules. The converse assertion follows from the part
(b). The proof is complete.
\end{proof}

The following example shows the existence of automorphism of \(\mathcal{S}\) which can not be extended to the whole algebra
\(\mathcal{L}.\)

\begin{exam}
Let $\mathfrak{sl}_3$ be the Lie algebra consisting of all
traceless $3\times 3$ complex matrices, and
let $\mathcal{V}(\Lambda_1)=\mathbb{C}^3$ be the standard
$\mathfrak{sl}_3$-module (by left matrix multiplication). Its
highest weight is the first fundamental weight $\Lambda_1$.
Consider the automorphism of  $\mathfrak{sl}_3$:
$$
\sigma:\mathfrak{sl}_3\to \mathfrak{sl}_3,\,\, x\mapsto -x^t,
$$
for all $x\in \mathfrak{sl}_3$,  where $x^t$ is the   transpose of
$x$. This automorphism $\sigma$ cannot be extended to an automorphism of
the whole simple Leibniz algebra $\mathfrak{sl}_3\ltimes V(\Lambda_1)$
since $\mathcal{V}(\Lambda_1)^\sigma\simeq
\mathcal{V}(\Lambda_2)\not\simeq \mathcal{V}(\Lambda_1),$ where
\(\Lambda_2=-\sigma\circ \Lambda_1\) (see \cite[P. 116, Exersice
21.6]{Hum}).
\end{exam}

We define the following subgroups of Aut$(L)$:
$$\aligned
{\rm{Aut}}(\mathcal{L})_\mathcal{I}=&\{\varphi\in
{\rm{Aut}}(\mathcal{L}): \varphi|_\mathcal{S}=
{\rm{id}}_\mathcal{S}\},\\
{\rm{Aut}}(\mathcal{L})_\mathcal{S}=&\{\varphi\in {\rm{Aut}}(\mathcal{L}):  \varphi(\mathcal{S})=\mathcal{S}\},\\
{\rm{Aut}}(\mathcal{L})_0=&\{\varphi\in {\rm{Aut}}(\mathcal{L}):
\varphi|_\mathcal{I}=
{\rm{id}}_\mathcal{I}\}.\\
\endaligned
$$

\begin{lm}\label{groupauto1}
Let $\mathcal{L}=\mathcal{S}\dot{+} \mathcal{I}$ be a semisimple
Leibniz algebra.  Then we have the following isomorphisms:
$$
{\rm{Aut}}(\mathcal{L})_\mathcal{I} \cong
{\rm{Aut}}_S(\mathcal{I}),
 \quad {\rm{Aut}}(\mathcal{L})_0 \cong {\rm{hom}}_S(\mathcal{S},\mathcal{I}).$$
\end{lm}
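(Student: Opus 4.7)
The plan is to establish each isomorphism by constructing a natural map out of ${\rm{Aut}}(\mathcal{L})_\mathcal{I}$ (respectively ${\rm{Aut}}(\mathcal{L})_0$) and exhibiting an explicit inverse built from the semidirect product structure $\mathcal{L}=\mathcal{S}\ltimes \mathcal{I}$ supplied by Theorem~\ref{thm25}(c). Throughout I use that $\varphi(\mathcal{I})=\mathcal{I}$ for any $\varphi\in {\rm{Aut}}(\mathcal{L})$ (as noted just before Lemma~\ref{lmZ51}), together with the decomposition $\varphi = \varphi_1 + \varphi_2 + \varphi_\mathcal{I}$ analyzed in Lemmas~\ref{lmZ51} and~\ref{lmZ52}.

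For the first isomorphism, if $\varphi\in {\rm{Aut}}(\mathcal{L})_\mathcal{I}$ then $\varphi_1=\mathrm{id}_\mathcal{S}$ and $\varphi_2=0$, so by Lemma~\ref{lmZ51}(b) the restriction $\varphi_\mathcal{I}:\mathcal{I}\to \mathcal{I}^{\mathrm{id}}=\mathcal{I}$ is an $\mathcal{S}$-module automorphism. This defines a map $\Phi:\varphi\mapsto \varphi_\mathcal{I}$ into ${\rm{Aut}}_\mathcal{S}(\mathcal{I})$. Multiplicativity of $\Phi$ follows from $\varphi(\mathcal{I})=\mathcal{I}$ (so $\varphi_\mathcal{I}=\varphi|_\mathcal{I}$) and the obvious equality $(\varphi\circ\psi)|_\mathcal{I}=\varphi|_\mathcal{I}\circ \psi|_\mathcal{I}$. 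Surjectivity is immediate from Lemma~\ref{lmZ52}(a) with $\sigma=\mathrm{id}_\mathcal{S}$: any $\sigma_\mathcal{I}\in {\rm{Aut}}_\mathcal{S}(\mathcal{I})$ is realized by $\varphi(x+i)=x+\sigma_\mathcal{I}(i)$, which lies in ${\rm{Aut}}(\mathcal{L})_\mathcal{I}$. Injectivity holds because $\varphi$ is determined by $\varphi|_\mathcal{S}$ and $\varphi|_\mathcal{I}$.

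For the second isomorphism, the crucial point is that every $\varphi\in {\rm{Aut}}(\mathcal{L})_0$ automatically satisfies $\varphi_1=\mathrm{id}_\mathcal{S}$. Indeed, applying $\varphi$ to $[i,x]$ for $i\in \mathcal{I}$, $x\in \mathcal{S}$ and using $\varphi|_\mathcal{I}=\mathrm{id}$ yields $[i,x]=[i,\varphi_1(x)]$, so $\varphi_1(x)-x$ lies in the annihilator $K=\{y\in \mathcal{S}:[\mathcal{I},y]=0\}$. A short calculation with the Leibniz identity shows that $K$ is a Lie ideal of the semisimple algebra $\mathcal{S}$, hence a direct sum of some of the simple factors $\mathcal{S}_i$; but any such $\mathcal{S}_i\subseteq K$ would split off as an ideal summand of $\mathcal{L}$, since $[\mathcal{S}_i,\mathcal{I}]\subseteq[\mathcal{L},\mathcal{I}]=0$ and $[\mathcal{S}_i,\mathcal{S}_j]=0$ for $j\ne i$, contradicting the standing indecomposability hypothesis. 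Hence $K=0$ and $\varphi_1=\mathrm{id}_\mathcal{S}$, so Lemma~\ref{lmZ51}(c) places $\varphi_2$ in ${\rm{hom}}_\mathcal{S}(\mathcal{S},\mathcal{I})$. The resulting map $\Psi:\varphi\mapsto \varphi_2$ has an explicit inverse $\psi\mapsto \varphi_\psi$ with $\varphi_\psi(x+i)=x+\psi(x)+i$; a direct check inside $\mathcal{S}\ltimes \mathcal{I}$, using $[\mathcal{I},\mathcal{I}]=[\mathcal{S},\mathcal{I}]=0$, verifies that $\varphi_\psi$ is an automorphism precisely when $\psi$ is an $\mathcal{S}$-module homomorphism, and composition of $\varphi_\psi$'s corresponds to addition of the $\psi$'s.

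The main obstacle is the faithfulness step $K=0$: this is the only place where more than the formalism of Lemmas~\ref{lmZ51} and~\ref{lmZ52} is needed, and the only place where indecomposability is actually used. Once faithfulness of the $\mathcal{S}$-action on $\mathcal{I}$ is secured, both claimed isomorphisms reduce to straightforward bookkeeping in the semidirect product $\mathcal{S}\ltimes \mathcal{I}$.
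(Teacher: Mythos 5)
Your proof is correct and follows essentially the same route as the paper: restriction/extension maps between $\mathrm{Aut}(\mathcal{L})_\mathcal{I}$ and $\mathrm{Aut}_\mathcal{S}(\mathcal{I})$, and between $\mathrm{Aut}(\mathcal{L})_0$ and $\hom_\mathcal{S}(\mathcal{S},\mathcal{I})$ via $\varphi_\psi(x+i)=x+\psi(x)+i$. The only difference is that you spell out the faithfulness step (the annihilator $K$ is an ideal of $\mathcal{S}$, hence a sum of simple factors, each of which would split off and contradict indecomposability), which the paper compresses into the one-line assertion that $[\mathcal{I},\mathcal{S}_{\varphi_1(x)-x}]=0$ forces $\varphi_1(x)=x$; your version is the more complete one.
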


\begin{proof} Let us prove the first isomorphism. For an arbitrary \(\varphi\in \textrm{Aut}(\mathcal{L})_\mathcal{I}\) and $x\in \mathcal{S}, \ i \in \mathcal{I}$ we have
\begin{eqnarray*}
\varphi([i, x]) &=& [\varphi(i), \varphi(x)]= [\varphi(i), x].
\end{eqnarray*}
Thus \(\varphi_{|_\mathcal{I}}\in  {\textrm{Aut}}_S(\mathcal{I}).\)

Let now \(\sigma\in  {\textrm{Aut}}_S(\mathcal{I}).\) Set
\begin{eqnarray}\label{oneiso}
\varphi_\sigma(x+i)=x+\sigma(i),\, x+i\in \mathcal{S}+\mathcal{I}.
\end{eqnarray}
Then \(\varphi_\sigma\in \textrm{Aut}(\mathcal{L})_\mathcal{I},\)
and  \eqref{oneiso} implies that the mapping \(\sigma \to
\varphi_\sigma\) is an isomorphism from ${\rm{Aut}}_S(\mathcal{I})$ to ${\rm{Aut}}(\mathcal{L})_\mathcal{I}$.

Now we shall prove the second assertion. Let \(\varphi\in \textrm{Aut}(\mathcal{L})_0.\)  Since
\begin{eqnarray*} [i,
x]=\varphi([i,x])=[\varphi(i), \varphi(x)]=[i, \varphi_1(x)],
\end{eqnarray*}
we obtain \([i, \varphi_1(x)-x]=0\) for all \(i\in
\mathcal{I}, x\in \mathcal{S}.\) Thus \([\mathcal{I},
\mathcal{S}_{\varphi_1(x)-x}]=0,\) where
\(\mathcal{S}_{\varphi_1(x)-x}\) is the ideal of \(\mathcal{S}\)
generated by the element \(\varphi_1(x)-x.\) Thus
\(\varphi_1(x)=x\) for all \(x\in \mathcal{S},\) i.e.,
\(\varphi_1=\textrm{id}_\mathcal{S}.\) So, we have
\begin{eqnarray}\label{sss}
\varphi\in \textrm{Aut}(\mathcal{L})_0\, \Rightarrow \,
\varphi_1=\textrm{id}_\mathcal{S}.
\end{eqnarray}

The equalities
\begin{eqnarray*}
\varphi([x, y])=[x, y]+\varphi_2([x, y]),
\end{eqnarray*}
\begin{eqnarray*}
[\varphi(x), \varphi(y)]=[x+\varphi_2(x), y+\varphi_2(y)]=[x, y]+
[\varphi_2(x), y], \forall x, y\in \ \mathcal{S}
\end{eqnarray*}
imply that \(\varphi_2([x, y])=[\varphi_2(x), y]\). Therefore,
\(\varphi_2\in \textrm{hom}_S(\mathcal{S}, \mathcal{I}).\)

Let \(\sigma\in  \textrm{hom}_S(\mathcal{S}, \mathcal{I}).\) Set
\begin{eqnarray}\label{secondiso}
\varphi_\sigma(x+i)=x+\sigma(x)+i,\, x+i\in
\mathcal{S}+\mathcal{I}.
\end{eqnarray}
Then \(\varphi_\sigma\in \textrm{Aut}(\mathcal{L})_0,\) and by
\eqref{secondiso} we see that the mapping \(\sigma \to
\varphi_\sigma\) is an isomorphism of the additive group
$\textrm{hom}_S(\mathcal{S}, \mathcal{I})$ and the multiplicative group ${\textrm{Aut}}(\mathcal{L})_0.$
\end{proof}

Denote
\[
\textrm{Aut}(\mathcal{S})_0=\{\varphi\in {\rm{Aut}}(\mathcal{S}):
\exists \ \psi\in {\rm{Aut}}(\mathcal{L})_\mathcal{S},
\psi|_\mathcal{S}=\varphi\}.
\]
Lemmata~5.2, 5.3, \ref{groupauto1} and implication \eqref{sss} gives us the following matrix representations of the above mentioned groups:
$$\aligned
&{\rm{Aut}}(\mathcal{L})= \left\{\left(%
\begin{array}{cc}
  \varphi_1 & 0 \\
  \varphi_2 & \varphi_\mathcal{I}\\
\end{array}%
\right): \varphi_1 \in {\rm{Aut}}(\mathcal{S})_0,\, \varphi_\mathcal{I}
 \in {\rm{Iso}}_{\mathcal{S}}(\mathcal{I}, \mathcal{I}^{\varphi_1}), \ \varphi_2\in \textrm{hom}_{\mathcal{S}}(\mathcal{S}, \mathcal{I}^{\varphi_1})\right\},\\
&{\rm{Aut}}(\mathcal{L})_\mathcal{I}= \left\{\left(%
\begin{array}{cc}
  \textrm{id}_\mathcal{S} & 0 \\
  0 & \varphi_\mathcal{I}\\
\end{array}%
\right): \varphi_\mathcal{I} \in {\rm{Aut}}_{\mathcal{S}}(\mathcal{I})\right\},\\
&{\rm{Aut}}(\mathcal{L})_\mathcal{S}=  \left\{\left(%
\begin{array}{cc}
  \varphi_1 & 0 \\
  0 & \varphi_\mathcal{I}\\
\end{array}%
\right): \varphi_1 \in {\rm{Aut}}(\mathcal{S})_0,\, \varphi_\mathcal{I} \in{\rm{Iso}}_{\mathcal{S}}(\mathcal{I}, \mathcal{I}^{\varphi_1})\right\},\\
&{\rm{Aut}}(\mathcal{L})_0= \left\{\left(%
\begin{array}{cc}
  \textrm{id}_\mathcal{S} & 0 \\
  \varphi_2 & \textrm{id}_\mathcal{I} \\
\end{array}%
\right): \varphi_2\in \textrm{hom}_{\mathcal{S}}(\mathcal{S}, \mathcal{I})\right\},\\
\endaligned
$$
where ${\rm{Iso}}_{\mathcal{S}}(\mathcal{I}, \mathcal{I}^{\varphi_1})$ is the set of $\mathcal{S}$-module isomorphisms.

These representations show that
\({\rm{Aut}}(\mathcal{L})_\mathcal{I}\) and
\({\rm{Aut}}(\mathcal{L})_0\) are normal subgroups of
\({\rm{Aut}}(\mathcal{L})_\mathcal{S}\) and
\({\rm{Aut}}(\mathcal{L}),\) respectively.

\begin{cor}\label{groupauto2}
Let $\mathcal{L}=\mathcal{S}\dot{+} \mathcal{I}$ be a semisimple
Leibniz algebra.    \begin{itemize}
\item[(a).] $
{\rm{Aut}}(\mathcal{S})_0 \cong {\rm{Aut}}(\mathcal{L})_\mathcal{S}/
{\rm{Aut}}(\mathcal{L})_\mathcal{I}.
$
\item[(b).] If all $\mathcal{S}_i$ are not of type $A_l, D_l$ or $E_6$, then ${\rm{Aut}}(\mathcal{S})_0 ={\rm{Aut}}(\mathcal{S})$.
\end{itemize}
 \end{cor}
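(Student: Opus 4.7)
The plan is to prove (a) by a direct application of the first isomorphism theorem to a restriction homomorphism, and then to derive (b) from the classification of outer automorphism groups of simple Lie algebras combined with the exponential of a right-multiplication derivation.

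For part (a), I define $\Phi\colon \mathrm{Aut}(\mathcal{L})_{\mathcal{S}}\to \mathrm{Aut}(\mathcal{S})_0$ by $\Phi(\varphi)=\varphi|_{\mathcal{S}}$. The matrix descriptions displayed just before the corollary show that $\Phi$ is a well-defined group homomorphism: elements of $\mathrm{Aut}(\mathcal{L})_{\mathcal{S}}$ preserve $\mathcal{S}$, and composition restricts to composition of restrictions. Surjectivity is immediate from the very definition of $\mathrm{Aut}(\mathcal{S})_0$ as the image of this restriction map. The kernel consists of those $\varphi\in\mathrm{Aut}(\mathcal{L})_{\mathcal{S}}$ with $\varphi|_{\mathcal{S}}=\mathrm{id}_{\mathcal{S}}$, which is exactly $\mathrm{Aut}(\mathcal{L})_{\mathcal{I}}$. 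The first isomorphism theorem closes (a).

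For part (b), by Lemma~5.3(b) a $\sigma\in\mathrm{Aut}(\mathcal{S})$ belongs to $\mathrm{Aut}(\mathcal{S})_0$ if and only if $\mathcal{I}\simeq\mathcal{I}^{\sigma}$ as $\mathcal{S}$-modules. I would then invoke the classical classification of simple Lie algebra automorphisms via Dynkin diagram symmetries: the only simple types whose outer automorphism group is nontrivial are $A_l\,(l\geq 2),\ D_l\,(l\geq 4),\ E_6$. Hence under the hypothesis each $\mathrm{Aut}(\mathcal{S}_i)$ coincides with $\mathrm{Inn}(\mathcal{S}_i)$, and every $\sigma\in\mathrm{Aut}(\mathcal{S})$ preserving each simple summand is inner. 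Finally, for $x\in \mathcal{S}$ the right multiplication $R_x\colon y\mapsto[y,x]$ is a derivation of the Leibniz algebra $\mathcal{L}$ by the right Leibniz identity, so its exponential $\exp(R_x)$ is an automorphism of $\mathcal{L}$ stabilizing $\mathcal{S}$; its restriction to $\mathcal{S}$ equals $\exp(\mathrm{ad}\,x)=\mathrm{Ad}(\exp x)$. Since $\mathrm{Inn}(\mathcal{S})$ is generated by such elements, every inner automorphism of $\mathcal{S}$ is in $\mathrm{Aut}(\mathcal{S})_0$, proving (b).

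The main obstacle is not a computational one but a conceptual appeal to Lie theory: part (b) silently rests on the outer-automorphism classification of complex simple Lie algebras. An equivalent, purely module-theoretic formulation would be to show that for these types every finite-dimensional representation $\mathcal{V}$ satisfies $\mathcal{V}\simeq\mathcal{V}^{\sigma}$ for all $\sigma\in\mathrm{Aut}(\mathcal{S}_i)$, but this is the same content. The construction $\sigma\mapsto\exp(R_x)$ has the advantage of providing the extension explicitly, which bypasses having to invoke general module-isomorphism arguments.
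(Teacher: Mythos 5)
Your part (a) is correct and is exactly the argument the paper leaves implicit: the matrix description of $\mathrm{Aut}(\mathcal{L})_{\mathcal{S}}$ shows that restriction to $\mathcal{S}$ is a group homomorphism onto $\mathrm{Aut}(\mathcal{S})_0$ (surjective by the very definition of $\mathrm{Aut}(\mathcal{S})_0$) with kernel $\mathrm{Aut}(\mathcal{L})_{\mathcal{I}}$, and the first isomorphism theorem finishes. Your $\exp(R_x)$ construction in (b) is also a legitimate and pleasantly explicit way to extend inner automorphisms; the paper's implicit route is instead to quote the fact stated before Lemma 5.2 that $\mathcal{V}\simeq\mathcal{V}^{\sigma}$ for inner $\sigma$ and then apply Lemma 5.3(a). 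Either works for the inner part.

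The gap is in the reduction ``$\mathrm{Out}(\mathcal{S}_i)=1$ for all $i$, hence every $\sigma\in\mathrm{Aut}(\mathcal{S})$ \emph{preserving each simple summand} is inner.'' That italicized hypothesis is doing real work and you never discharge it: by the decomposition \eqref{decss} the algebra $\mathcal{S}=\bigoplus_i n_i\mathcal{S}_i$ may contain several isomorphic simple ideals (the paper's own Example with $\mathfrak{sl}_2^1\oplus\mathfrak{sl}_2^2$ shows this happens), and $\mathrm{Aut}(\mathcal{S})$ then contains automorphisms permuting those copies. These are not inner, are not excluded by the hypothesis on the types of the $\mathcal{S}_i$, and are not reached by any $\exp(R_x)$. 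Worse, for such a permutation $\sigma$ the criterion $\mathcal{I}\simeq\mathcal{I}^{\sigma}$ of Lemma 5.3(b) can genuinely fail: take $\mathcal{S}=\mathcal{S}_1\oplus\mathcal{S}_2$ with both summands of type $G_2$ and $\mathcal{I}=V(\omega_1)\otimes V(\omega_2)$ (the $7$-dimensional module tensored with the $14$-dimensional one); the swap $\sigma$ gives $\mathcal{I}^{\sigma}\simeq V(\omega_2)\otimes V(\omega_1)$, which has a different highest weight and so is not isomorphic to $\mathcal{I}$ as an $\mathcal{S}$-module. So your argument proves only that every summand-preserving automorphism lies in $\mathrm{Aut}(\mathcal{S})_0$; to get the stated equality $\mathrm{Aut}(\mathcal{S})_0=\mathrm{Aut}(\mathcal{S})$ you must either add the hypothesis that the simple ideals of $\mathcal{S}$ are pairwise non-isomorphic (all $n_i=1$, in which case every automorphism fixes each summand and your proof closes), or separately verify $\mathcal{I}\simeq\mathcal{I}^{\sigma}$ for the permutation part --- which, as the example shows, is not automatic.
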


Now we can prove the main result of this section.

\begin{thm}\label{groupauto2}
Let $\mathcal{L}=\mathcal{S}\dot{+} \mathcal{I}$ be a
semisimple Leibniz algebra.  Then
$$
{\rm{Aut}}(\mathcal{L})={\rm{Aut}}(\mathcal{L})_S   \ltimes
  {\rm{Aut}}(\mathcal{L})_0.
$$
 \end{thm}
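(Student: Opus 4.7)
The plan is to verify the three standard conditions for a semidirect product decomposition using the matrix representations that the authors have already set up: (i) $\mathrm{Aut}(\mathcal{L})_0$ is normal in $\mathrm{Aut}(\mathcal{L})$, (ii) the subgroups intersect trivially, and (iii) every element of $\mathrm{Aut}(\mathcal{L})$ factors as a product of an element of $\mathrm{Aut}(\mathcal{L})_{\mathcal{S}}$ with one of $\mathrm{Aut}(\mathcal{L})_0$. The normality in (i) has already been noted from the matrix description.

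For (ii), any $\varphi$ in the intersection satisfies both $\varphi|_{\mathcal{I}}=\mathrm{id}_{\mathcal{I}}$ and $\varphi(\mathcal{S})=\mathcal{S}$, i.e.\ $\varphi_2=0$, and combining $\varphi|_{\mathcal{I}}=\mathrm{id}_{\mathcal{I}}$ with the implication \eqref{sss} proved in Lemma~\ref{groupauto1} yields $\varphi_1=\mathrm{id}_{\mathcal{S}}$; hence $\varphi=\mathrm{id}_{\mathcal{L}}$.

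The substance of the proof is (iii). Given an arbitrary $\varphi\in\mathrm{Aut}(\mathcal{L})$ with matrix $\begin{pmatrix}\varphi_1&0\\ \varphi_2&\varphi_{\mathcal{I}}\end{pmatrix}$ (where $\varphi_1\in\mathrm{Aut}(\mathcal{S})_0$, $\varphi_{\mathcal{I}}\in\mathrm{Iso}_{\mathcal{S}}(\mathcal{I},\mathcal{I}^{\varphi_1})$, and $\varphi_2\in\mathrm{hom}_{\mathcal{S}}(\mathcal{S},\mathcal{I}^{\varphi_1})$ by Lemmata~5.2 and 5.3), I would set $\psi=\varphi_{\mathcal{I}}^{-1}\circ\varphi_2\colon\mathcal{S}\to\mathcal{I}$ and verify that $\psi\in\mathrm{hom}_{\mathcal{S}}(\mathcal{S},\mathcal{I})$. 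This is the main technical point and uses that $\varphi_{\mathcal{I}}^{-1}\colon \mathcal{I}^{\varphi_1}\to\mathcal{I}$ is $\mathcal{S}$-equivariant in the sense $\varphi_{\mathcal{I}}^{-1}([v,\varphi_1(x)])=[\varphi_{\mathcal{I}}^{-1}(v),x]$; combined with $\varphi_2([x,y])=[\varphi_2(x),\varphi_1(y)]$ from Lemma~5.2, this yields $\psi([x,y])=[\psi(x),y]$. Hence by Lemma~\ref{groupauto1}, the map $\varphi_\psi\in\mathrm{Aut}(\mathcal{L})_0$ given by $\varphi_\psi(x+i)=x+\psi(x)+i$ is a well-defined automorphism.

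Finally, I would compute the composition directly on a general element $x+i\in\mathcal{S}\dot{+}\mathcal{I}$: the matrix product
\[
\begin{pmatrix}\varphi_1 & 0\\ 0 & \varphi_{\mathcal{I}}\end{pmatrix}\begin{pmatrix}\mathrm{id}_{\mathcal{S}} & 0\\ \psi & \mathrm{id}_{\mathcal{I}}\end{pmatrix}=\begin{pmatrix}\varphi_1 & 0\\ \varphi_{\mathcal{I}}\circ\psi & \varphi_{\mathcal{I}}\end{pmatrix}=\begin{pmatrix}\varphi_1 & 0\\ \varphi_2 & \varphi_{\mathcal{I}}\end{pmatrix}
\]
recovers $\varphi$, proving $\varphi\in\mathrm{Aut}(\mathcal{L})_{\mathcal{S}}\cdot\mathrm{Aut}(\mathcal{L})_0$. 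Combined with (i) and (ii), this gives the desired semidirect product decomposition. The only non-routine step is verifying that $\psi=\varphi_{\mathcal{I}}^{-1}\circ\varphi_2$ really is an $\mathcal{S}$-module homomorphism into $\mathcal{I}$ with its \emph{original} action, since $\varphi_2$ lands in $\mathcal{I}^{\varphi_1}$; the twist by $\varphi_1$ is precisely undone by the equivariance of $\varphi_{\mathcal{I}}^{-1}$.
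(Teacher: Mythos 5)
Your proof is correct and follows essentially the same route as the paper: you factor $\varphi$ as the block-diagonal automorphism determined by $(\varphi_1,\varphi_{\mathcal{I}})$ in ${\rm{Aut}}(\mathcal{L})_{\mathcal{S}}$ composed with the unipotent automorphism determined by $\varphi_{\mathcal{I}}^{-1}\circ\varphi_2$, which is exactly the paper's decomposition $\varphi=\psi\circ\eta$. The only difference is that you spell out two details the paper leaves implicit, namely the trivial intersection of the two subgroups and the verification that $\varphi_{\mathcal{I}}^{-1}\circ\varphi_2$ is an $\mathcal{S}$-module homomorphism into the untwisted $\mathcal{I}$.
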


\begin{proof} Let \(\varphi\in  {\rm{Aut}}(\mathcal{L}).\)
Set
\[
\psi(x+i)=\varphi_1(x)+\varphi_I(i),\ x+i \in
\mathcal{S}+\mathcal{I}.
\]
Let us show that \(\psi\) is also an automorphism. Indeed,
\begin{eqnarray*}
\psi([x+i,
y+j])&=&\psi([x,y]+[i,y])=\varphi_1([x,y])+\varphi_I([i,y])\\
&=& [\varphi_1 (x), \varphi_1 (y)]+\varphi([i,y])= [\varphi_1(x),
\varphi_1(y)]+[\varphi(i),
\varphi(y)]\\
&=& [\varphi_1(x),\varphi_1(y)]+[\varphi_I(i),
\varphi_1(y)+\varphi_I(y)]\\
&=& [\varphi_1(x)+\varphi_\mathcal{I}(i),\varphi_1(y)+
\varphi_{\mathcal{I}}(j)]=[\psi(x+i),\psi(y+j)].
\end{eqnarray*}
It is clear that \(\psi\in {\rm{Aut}}(\mathcal{L})_\mathcal{S}.\)

 Now consider the automorphism
\[
\eta=\psi^{-1}\circ\varphi.
\] Then
\[
\eta(x+i)=x+\varphi_{\mathcal{I}}^{-1}\varphi_2(x)+i
\]
and  therefore  \(\eta=\psi^{-1}\circ\varphi\in
{\rm{Aut}}(\mathcal{L})_0.\) Since \(\varphi=\psi\circ\eta ,\) it
follows that
$$
 {\rm{Aut}}(\mathcal{L})={\rm{Aut}}(\mathcal{L})_S   \ltimes
  {\rm{Aut}}(\mathcal{L})_0.$$
The proof is complete.
\end{proof}

So far we have described the automorphism groups for all semisimple Leibniz algebras.

\

\begin{center}
\bf Acknowledgments
\end{center}

\noindent K.Z. is partially supported by  NSF of China (Grant
11271109) and NSERC.

\end{document}